\documentclass[11pt]{amsart}
\usepackage[utf8]{inputenc}
\usepackage{amsmath,amssymb,amsthm}
\usepackage[colorlinks,citecolor=blue]{hyperref}
\usepackage{cleveref}
\usepackage{enumitem}
\usepackage{booktabs}
\usepackage{mathrsfs}
\usepackage{graphicx}
\usepackage{tikz}
\usetikzlibrary{patterns, shapes, arrows}
\usetikzlibrary{trees}
\usetikzlibrary{cd, graphs.standard}
\newcommand{\ZZ}{\mathbb{Z}}
\newcommand{\NN}{\mathbb{N}}

\newcommand{\QQ}{\mathbb{Q}}
\newcommand{\RR}{\mathbb{R}}

\newcommand{\TA}{\mathbb{A}}
\newcommand{\TB}{\mathbb{B}}

\newcommand{\aA}{{\mathcal A}}

\newcommand{\gG}{{\mathcal G}}
\newcommand{\iI}{{\mathcal I}}
\newcommand{\jJ}{{\mathcal J}}
\newcommand{\pP}{{\mathcal P}}
\newcommand{\qQ}{{\mathcal Q}}
\newcommand{\rR}{{\mathcal R}}
\newcommand{\uU}{{\mathcal U}}
\newcommand{\zZ}{{\mathcal Z}}
\newcommand{\wW}{{\mathcal W}}
\newcommand{\asc}{{\rm asc}}
\newcommand{\des}{{\rm des}}
\newcommand{\Ehr}{{\rm Ehr}}
\newcommand{\rev}{{\rm rev}}

\newcommand{\bDelta}{\boldsymbol\Delta}

\newtheorem{theorem}{Theorem}[section] 
\newtheorem{proposition}[theorem]{Proposition} 
\newtheorem{conjecture}[theorem]{Conjecture} 
 
\newtheorem{lemma}[theorem]{Lemma} 

\newtheorem{question}[theorem]{Question}
 
\newtheorem{remark}[theorem]{Remark} 
\newtheorem{example}[theorem]{Example}

\newcommand{\oeis}[1]{\href{https://oeis.org/#1}{{#1}}}

\newcommand{\down}{\mathscr{D}}
\newcommand{\up}{\mathscr{U}}

\newcommand{\cov}{\operatorname{cov}}

\newcommand{\Max}{\operatorname{Max}}

\newcommand{\re}{\operatorname{Re}}
\newcommand{\nara}{\operatorname{Nar}}
\newcommand{\biE}{\mathbb{E}}

\definecolor{racines}{RGB}{197,225,165}
\begin{document}
\title[Polytopes and posets associated to preorders]
{Polytopes and posets associated to preorders}

\author{Christos~A.~Athanasiadis}
\address{Department of Mathematics\\
National and Kapodistrian University of Athens\\
Panepistimioupolis\\ 15784 Athens, Greece}
\email{caath@math.uoa.gr}

\author{Frédéric~Chapoton}
\address{Institut de Recherche Mathématique 
Avancée, UMR 7501
\\ Université de Strasbourg et CNRS\\
7 rue René-Descartes\\
67000 Strasbourg, France}
\email{chapoton@unistra.fr}

\date{\today}
\thanks{ \textit{Mathematics Subject
Classifications}: 52B20, 05A15, 06A07}
\thanks{ \textit{Key words and phrases}.
Preorder, lattice polytope, generalized 
permutohedron, Ehrhart polynomial,
$h^\ast$-polynomial, zeta polynomial, reflexive polytope.
}

\begin{abstract}
Preorder polytopes, defined from preorders on 
finite sets, are introduced and studied from a 
lattice point enumeration point of view. They 
naturally generalize arbor polytopes, recently 
introduced and studied by the second named 
author. Preorder 
polytopes are shown to be lattice polytopes 
which satisfy a certain duality relating their 
Ehrhart polynomials with the zeta polynomials 
of their posets of lattice points. A combinatorial 
interpretation of the normalized volume of a 
preorder polytope is proven, together with 
formulas for the Ehrhart polynomial and 
the $h^\ast$-polynomial, and a combinatorial 
interpretation of the latter is conjectured.
Several conjectures and results on the lattice 
point enumeration of arbor polytopes are 
generalized to preorder polytopes, new 
conjectures are proposed and new interesting 
examples of preorder polytopes are studied. 
\end{abstract}

\maketitle

\section{Introduction}
\label{sec:intro}

Arbor polytopes are lattice polytopes
introduced and studied in~\cite{Cha25b}. This 
paper studies a natural generalization. Let us 
first recall the definition and main properties 
of arbor polytopes. Given a finite set $E$, an 
\emph{arbor} on $E$ is a rooted tree with 
vertices the blocks of a partition of $E$. The
\emph{arbor polytope} associated to such an arbor 
$\sigma$ is the polytope $\qQ_\sigma$ in $\RR^E$ 
defined by the linear inequalities $x_e \ge 0$
for $e \in E$ and
\begin{equation} \label{eq:arbor}
\sum_{e \in \down(v)} x_e \le |\down(v)|
\end{equation}
for every vertex $v$ of $\sigma$, where 
$\down(v)$ is the union of all vertices which 
are descendants of $v$ in $\sigma$ (including 
$v$ itself). The set of lattice points of 
$\qQ_\sigma$, denoted by $P_\sigma$ and 
partially ordered as a subposet of the product 
order $\NN^E$, also plays a central role in 
\cite{Cha25b}. It was shown there that 
$\qQ_\sigma$ is a simple lattice polytope with 
very interesting properties from a lattice 
point enumeration point of view. 

Let us present two conjectures and one result 
about arbor polytopes which partly motivated 
the present work (see Section~\ref{sec:pre}
for relevant background and any undefined 
terminology). Let us denote by $h_i(\sigma)$ 
the number of lattice points of $\qQ_\sigma$ 
having $i$ nonzero coordinates for $0 \le i 
\le n$ and set $h(\sigma,t) = \sum_{i=0}^n h_i
(\sigma)t^i$, where $n$ is the cardinality of 
the ground set $E$. Following \cite{Cha25b}, 
we call $h(\sigma,t)$ the \emph{$h$-polynomial} 
of $\sigma$.
\begin{conjecture} \label{conj:arbor-h}
{\rm (\cite[Conjecture~1.1]{Cha25b})} 
The polynomial $h(\sigma,t)$ is equal to the 
$h$-polynomial of an $n$-dimensional simplicial 
polytope for every arbor $\sigma$ on an 
$n$-element set $E$. In particular, $h(\sigma,t)$ 
is palindromic and unimodal.
\end{conjecture}
\begin{conjecture}
{\rm (\cite[Conjecture~3.2]{Cha25b})}
\label{conj:arbor-roots}
For every arbor $\sigma$, all roots of the
Ehrhart polynomial of $\qQ_\sigma$ are real and 
lie in the interval $[-1,0]$. In particular, the
$h^\ast$-polynomial of $\qQ_\sigma$ has only real
roots.
\end{conjecture}

An arbor $\sigma$ is said to be \emph{linear} if 
the associated rooted tree is a path. The reverse 
arbor $\rev(\sigma)$ is then defined by reversing 
the order of the vertices of this path. The 
following statement was conjectured in 
\cite[Section~4]{Cha25b}. 
\begin{theorem} \label{thm:Ath26}
{\rm (\cite[Theorem~1.3]{Ath26})}
The Ehrhart polynomial of $\qQ_\sigma$ and the
zeta polynomial of $P_{\rev(\sigma)}$ are related 
by the identity 
\[ \Ehr(\qQ_\sigma, t) = 
   \zZ(P_{\rev(\sigma)}, t+1) \]
for every linear arbor $\sigma$.
\end{theorem}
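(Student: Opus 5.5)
The plan is to turn both sides into explicit counting problems that depend only on the block sizes, and then match them. Write the linear arbor $\sigma$ as a path $B_1,B_2,\dots,B_k$ with $B_1$ the root, and let $b_i=|B_i|$. Then $\down(B_i)=B_i\cup\cdots\cup B_k$, so $\Ehr(\qQ_\sigma,t)$ counts the $x\in\NN^E$ satisfying
\[ \sum_{j\ge i}\, s_j(x) \le t\,(b_i+\cdots+b_k) \qquad (1\le i\le k), \]
where $s_j(x)=\sum_{e\in B_j}x_e$. In $\rev(\sigma)$ the root is $B_k$, so $P_{\rev(\sigma)}$ consists of the $y\in\NN^E$ with prefix constraints $\sum_{j\le i}s_j(y)\le b_1+\cdots+b_i$. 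Also $\zZ(P,t+1)$ counts multichains $y^{(1)}\le\cdots\le y^{(t)}$ in $P$. So the claim is that these two sets have the same cardinality for every $t\ge 0$.

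\textbf{Step 1.} Express both counts in terms of block sums. On the Ehrhart side, fibering over the vector $(s_1,\dots,s_k)$ gives
\[ \Ehr(\qQ_\sigma,t)=\sum_{(s_i)}\,\prod_i \binom{s_i+b_i-1}{b_i-1}, \]
where the sum runs over $(s_i)$ obeying the suffix constraints. On the zeta side, pass to increments $d^{(1)}=y^{(1)}$ and $d^{(j)}=y^{(j)}-y^{(j-1)}$. A multichain is then a $t$-tuple of vectors in $\NN^E$, and the only constraints are that each partial sum $y^{(m)}$ satisfies the prefix inequalities. Since the constraints are monotone and $y^{(t)}$ dominates every $y^{(m)}$, all of them hold as soon as $y^{(t)}\in P_{\rev(\sigma)}$. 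Hence
\[ \zZ(P_{\rev(\sigma)},t+1)=\sum_{y\in P_{\rev(\sigma)}}\,\prod_{e\in E}\binom{y_e+t-1}{t-1}. \]

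\textbf{Step 2.} Collapse this to block sums as well. By Vandermonde, $\sum_{y\in\NN^{B_i},\,|y|=s}\prod_e\binom{y_e+t-1}{t-1}=\binom{s+tb_i-1}{tb_i-1}$. Therefore
\[ \zZ(P_{\rev(\sigma)},t+1)=\sum_{(s_i)}\,\prod_i\binom{s_i+tb_i-1}{tb_i-1}, \]
summed over $(s_i)$ with prefix sums at most $b_1+\cdots+b_i$. The Ehrhart side is a weighted count over a region in $\NN^k$ scaled by $t$ with weights depending on $b_i$. The zeta side is unscaled but has weights involving $tb_i$. This exchange of the roles of $t$ and $b_i$ is what I expect to drive the identity.

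\textbf{Step 3 (main obstacle).} Prove the resulting identity of binomial sums. My first attempt will be induction on $k$. Removing the leaf block $B_k$ on the Ehrhart side and the root block $B_1$ on the zeta side (so that the constraint involving only one block is the one peeled off), I will set up recursions in terms of a generalized count in which the last bound $b_1+\cdots+b_i$ is replaced by an arbitrary integer $c$, and prove the generalized identity for all $c$. The base case $k=1$ reads
\[ \sum_{s\le tb}\binom{s+b-1}{b-1}=\binom{tb+b}{b}=\sum_{s\le b}\binom{s+tb-1}{tb-1}, \]
which is a symmetric hockey-stick identity and already displays the $t\leftrightarrow b$ symmetry.

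If the induction becomes unwieldy, I will instead look for a direct bijection. Interpret $\binom{s+tb-1}{tb-1}$ as choosing multisets from $t$ copies of each block, and read these $t$ copies as the $t$ dilation levels. A lattice-path or cyclic-lemma style argument should then send the suffix constraints scaled by $t$ to prefix constraints on $t$ stacked copies. Verifying that this map respects all $k$ constraints simultaneously is the delicate point.
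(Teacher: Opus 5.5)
\textbf{Steps 1--2 are correct, but the proof is missing.} Your expression $\zZ(P_{\rev(\sigma)},t+1)=\sum_{y\in P_{\rev(\sigma)}}\prod_e\binom{y_e+t-1}{t-1}$ is exactly the zeta-side computation in the paper. These steps only restate the two sides, however. The whole content of the theorem is the identity in Step~3, and there you give a plan, not a proof.

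\textbf{The induction, as sketched, does not close.} Peeling $B_k$ on the Ehrhart side and $B_1$ on the zeta side leaves the block sequences $(b_1,\dots,b_{k-1})$ and $(b_2,\dots,b_k)$. So no single inductive hypothesis covers both sides. The recursion also shifts \emph{all} remaining bounds (by $tb_k-s_k$, respectively $b_1-s_1$), not just the last one. Worse, freeing a bound by a parameter $c$ breaks the identity already for $k=1$:
\begin{itemize}
\item bound $c$ gives $\binom{tc+b}{b}$ versus $\binom{c+tb}{tb}$, which is $5\ne 6$ for $t=2$, $b=1$, $c=2$;
\item a uniform shift by $c$ gives $\binom{tb+b+c}{b}$ versus $\binom{tb+b+c}{tb}$.
\end{itemize}
Note that $\binom{c+tb}{tb}$ counts the points of $\{x\in\NN^{c}: |x|\le tb\}$. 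A free bound on one side thus corresponds to changing the \emph{number of coordinates} on the other. This suggests the right tool is a conjugation of lattice paths, not an induction.

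\textbf{The paper's route.} The paper obtains this statement as the chain case of Theorem~\ref{thm:main-b}(b). It recognizes $\qQ_\sigma=\sum_i b_i\bDelta_{B_1\cup\cdots\cup B_i}$ as a $Y$-type generalized permutohedron. Postnikov's formula (Theorem~\ref{thm:post}, with the draconian sequences identified in Lemma~\ref{lem:tau-drac}) then gives directly
\[ \Ehr(\qQ_\sigma,t)=\sum_{a\in P_{\rev(\sigma)}}\prod_e\binom{t+a_e-1}{a_e}. \]
This is precisely your Step~1 zeta expression, so no binomial identity is needed.

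\textbf{Completing your bijective idea.} This is the argument of Proposition~\ref{prop:matrixEZ}, and it works provided you do \emph{not} collapse to block sums in Step~2. List coordinates block by block and put $S_i=b_1+\cdots+b_i$.
\begin{enumerate}
\item Your Step~1 zeta count is the number of $a\in\NN^{tn}$ with $a_1+\cdots+a_{tS_i}\le S_i$ for all $i$.
\item The partial sums $A_1\le\cdots\le A_{tn}$ lie in $\{0,\dots,n\}$. Setting $c_j=\#\{m : A_m=j\}$ for $1\le j\le n$ is a bijection onto $\{c\in\NN^n : c_1+\cdots+c_n\le tn\}$.
\item By monotonicity, $A_{tS_i}\le S_i$ holds if and only if at least $tS_i$ of the partial sums are at most $S_i$. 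This is equivalent to $c_{S_i+1}+\cdots+c_n\le t(n-S_i)$.
\end{enumerate}
These are exactly the suffix inequalities defining $t\qQ_\sigma$. So the map respects all $k$ constraints simultaneously, which is the ``delicate point'' you left open, and no cyclic lemma is required.
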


\begin{figure}[!h]
\centering
\begin{tikzpicture}
\tikzstyle{every node}=[draw,shape=circle, thick]

\node at (0,0) (ac) {$a,c$};
\node at (2,0) (b) {$b$};
\node at (0,2) (e) {$e$};
\node at (2,2) (d) {$d$};

\path [draw] (ac) -- (e);
\path [draw] (b) -- (e);
\path [draw] (b) -- (d);
\end{tikzpicture}
\caption{The Hasse diagram of a preorder on the 
         set $\{a,b,c,d,e\}$.}
\label{fig1}
\end{figure}

This paper generalizes arbor polytopes by 
replacing arbors on finite sets by arbitrary 
preorders. A \emph{preorder} on a finite set 
$E$ can be viewed as a partial order on the 
set of blocks of a partition of $E$, called 
vertices (see Section~\ref{sec:preorders}). 
The example shown in Figure~\ref{fig1} will 
serve as a running example. Given a preorder 
$\tau$ on a finite set $E$, the \emph{preorder 
polytope} associated to $\tau$ is the polytope 
$\qQ_\tau$ in $\RR^E$ defined by the linear 
inequalities $x_e \ge 0$ for $e \in E$ and
\begin{equation} \label{eq:Q-tau-ineq-b}
\sum_{e \in \iI} x_e \le |\iI|
\end{equation}
for all order ideals $\iI$ of $\tau$. This 
definition recovers and extends the definition 
of arbor polytopes. Indeed, every arbor $\sigma$ 
can be considered as a preorder $\tau$ having 
the root of $\sigma$ as the maximum vertex. 
Then, for every vertex $v$ of $\sigma$ the set 
$\down(v)$ is a principal order ideal of $\tau$ 
and inequalities of the form 
\eqref{eq:Q-tau-ineq-b} associated to 
nonprincipal order ideals of $\tau$ are 
consequences (namely sums) of inequalities 
associated to principal order ideals. 

It is not obvious that preorder polytopes are
lattice polytopes and no longer true that they 
are always simple. Our first main result shows
that preorder polytopes are indeed lattice 
polytopes which form a subclass of the class of 
generalized permutohedra \cite{post09}.
\begin{theorem} \label{thm:main-a}
Let $\tau$ be a preorder on an $n$-element set. 
Then, $\qQ_\tau$ is an $n$-dimensional lattice 
polytope which is unimodularly equivalent to a 
generalized permutohedron.   
\end{theorem}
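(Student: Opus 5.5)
The plan is to recognize $\qQ_\tau$ as an integral polymatroid and then use the standard identification of polymatroids with generalized permutohedra. For $S \subseteq E$, let $\down(S)$ be the smallest order ideal of $\tau$ containing $S$: the union of all vertices lying weakly below some vertex that meets $S$. Define $f(S) = |\down(S)|$. I will show three things in order:
\begin{enumerate}[label=(\roman*)]
\item $f$ is a monotone, integer-valued submodular function with $f(\emptyset)=0$;
\item $\qQ_\tau = \{x \in \RR^E : x \ge 0,\ \sum_{e\in S} x_e \le f(S) \text{ for all } S\subseteq E\}$;
\item the conclusions of Theorem~\ref{thm:main-a} follow from (i) and (ii) by classical polymatroid theory.
\end{enumerate}

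For (i), monotonicity and $f(\emptyset)=0$ are clear. Down-closure commutes with unions, so $\down(S\cup T)=\down(S)\cup\down(T)$. It also satisfies $\down(S\cap T)\subseteq \down(S)\cap\down(T)$. Together these give
\[ f(S\cup T)+f(S\cap T) \le |\down(S)\cup\down(T)|+|\down(S)\cap\down(T)| = f(S)+f(T). \]

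For (ii), every order ideal $\iI$ satisfies $f(\iI)=|\iI|$, so the inequalities defining $\qQ_\tau$ are among the polymatroid inequalities. Conversely, take any $x\ge 0$ satisfying the ideal inequalities and any $S\subseteq E$. Nonnegativity gives $\sum_{e\in S}x_e \le \sum_{e\in\down(S)}x_e$, and the ideal inequality for $\down(S)$ bounds the right side by $|\down(S)| = f(S)$. So the two descriptions agree.

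For (iii), Edmonds' greedy algorithm shows that the polymatroid of an integer-valued submodular function has integral vertices: each vertex is obtained from a linear order $e_1,\dots,e_n$ of an initial segment of $E$ by setting $x_{e_i}=f(\{e_1,\dots,e_i\})-f(\{e_1,\dots,e_{i-1}\})$. Hence $\qQ_\tau$ is a lattice polytope. For full dimensionality, note that $\qQ_\tau$ contains $0$ and each unit vector $\mathbf{e}_i$, because any ideal containing $i$ has size at least $1$. Thus it contains the standard simplex and has dimension $n$.

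For the generalized permutohedron statement, adjoin a new element $0$ and set $E'=E\cup\{0\}$. Consider the map $\phi(x)=(x,\,f(E)-\sum_{e\in E}x_e)$ into the hyperplane $\sum_{e\in E'} y_e = f(E)=n$. Its image is the base polytope of the submodular function $g$ on $E'$ given by $g(S)=f(S)$ if $0\notin S$ and $g(S)=f(E)$ if $0\in S$. The map $\phi$ is an affine lattice isomorphism from $\ZZ^E$ onto the lattice points of that hyperplane, with inverse the coordinate projection, so it is a unimodular equivalence. The main thing to verify carefully is that $g$ is submodular and that $\phi(\qQ_\tau)$ is exactly its base polytope. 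I would check this case by case according to whether $0$ lies in $S$, in $T$, in both, or in neither. The only nontrivial case is $0\in S$ and $0\notin T$, where the inequality reduces to monotonicity of $f$. Then I would invoke Postnikov's characterization of generalized permutohedra as base polytopes of submodular functions~\cite{post09}.
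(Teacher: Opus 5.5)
Your proof is correct, but it runs in the opposite direction from the paper's. The paper starts from the Minkowski sum $\sum_v |v|\,\bDelta_{\up(v)}$, which is the case $r=1$, $s=0$ of $\qQ_\tau(r,s)$. It lifts this sum via your map $\phi$ (the inverse of the paper's projection $\rho$) to the $Y$-type generalized permutohedron $\sum_v |v|\,\Delta_{\up(v)\sqcup\{0\}}$. It then uses Postnikov's conversion $z_J=\sum_{I\subseteq J}y_I$ to show that, after eliminating $x_0$, the defining system is equivalent to $x_e\ge 0$ plus the order-ideal inequalities. Lattice-ness is automatic there, because a Minkowski sum of lattice simplices is a lattice polytope.

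You instead go from the inequalities to the polymatroid of $f(S)=|\down(S)|$, and you get integrality from Edmonds' greedy theorem. This has the bonus of an explicit description of the vertices. Your $g$ reproduces the paper's data exactly: $z_T=n-g(E'\setminus T)$ equals $0$ for $T\subseteq E$ and $|\jJ|$ for $T=J\sqcup\{0\}$, with $\jJ$ the largest order filter contained in $J$. So with the paper's definition~\eqref{eq:Pz} you need no external characterization theorem. Rewriting $y(S)\le g(S)$ as $y(T)\ge n-g(E'\setminus T)$ already puts $\phi(\qQ_\tau)$ in the form $\pP^z(\mathbf{z})$ with nonnegative $\mathbf{z}$.

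What your route does not give as written is the stronger $Y$-type statement, namely that $\qQ_\tau$ is a Minkowski sum of simplices. The paper needs this later to apply Theorem~\ref{thm:post} in the proof of Theorem~\ref{thm:main-b}, and to handle the family $\qQ_\tau(r,s)$. You are one remark away from it. Write $f(S)=\sum_v |v|\,\chi_v(S)$, where $\chi_v(S)=1$ if $\up(v)\cap S\neq\varnothing$ and $0$ otherwise. Each $\chi_v$ is the rank function of $\bDelta_{\up(v)}$, and the polymatroid of a sum of rank functions is the Minkowski sum of the polymatroids. This yields $\qQ_\tau=\sum_v |v|\,\bDelta_{\up(v)}$.
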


An advantage of considering preorder polytopes,
compared to the more restrictive class of arbor 
polytopes, is that there is a natural notion of
duality for preorders. This notion allows for a 
generalization of Theorem~\ref{thm:Ath26} which
we call \emph{Ehrhart-Zeta duality}. Another 
obvious advantage is that the new setting allows 
for more interesting examples. We denote by 
$\tau^\ast$ the dual of the preorder $\tau$ and
define the poset $P_\tau$ of lattice points of
$\qQ_\tau$ as we did for arbors. The following 
statement is the second main result of this 
paper.  
\begin{theorem} \label{thm:main-b}
Let $\tau$ be a preorder on an $n$-element set 
$E$.
\begin{itemize}
\itemsep=0pt
\item[{\rm (a)}]
The Ehrhart polynomial of $\qQ_\tau$ is given by 
the formula 
\[ \Ehr(\qQ_\tau, t) = \sum_{(a_e) \in 
   P_{\tau^\ast}} 
   \prod_{e \in E} \binom{t+a_e-1}{a_e}, \]
where the sum ranges over all lattice points 
$(a_e)$ of $\qQ_{\tau^\ast}$. In particular, the 
number of lattice points of $\qQ_\tau$ is equal to
that of $\qQ_{\tau^\ast}$. 

The normalized volume of $\qQ_\tau$ is equal to the 
number of words $w \in E^n$ such that for every order 
ideal $\iI$ of $\tau$, the elements of $\iI$ appear a 
total of at least $|\iI|$ times in $w$.

\item[{\rm (b)}]
The Ehrhart polynomial of $\qQ_\tau$ and the zeta 
polynomial of $P_{\tau^\ast}$ are related by the identity 
\[ \Ehr(\qQ_\tau, t) = \zZ(P_{\tau^\ast}, t+1). \]
\end{itemize}

\end{theorem}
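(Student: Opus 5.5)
The plan is to realize $\qQ_\tau$ as a Minkowski sum of coordinate simplices and then apply Postnikov's lattice point formula for such sums \cite{post09}. Part~(b) will then follow formally from part~(a).

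\textbf{Step 1: Minkowski decomposition.} For $e \in E$ let $\up(e)$ be the principal order filter of $\tau$ generated by $e$, i.e.\ the union of all vertices weakly above the vertex containing $e$. Set $\Delta_e = \mathrm{conv}(0, \epsilon_f : f \in \up(e))$, where $(\epsilon_f)$ is the standard basis of $\RR^E$. I claim that
\[ \qQ_\tau = \sum_{e \in E} \Delta_e . \]
The support function of the right-hand side in direction $\mathbf 1_S$, for $S \subseteq E$, equals $\#\{e : \up(e) \cap S \neq \emptyset\}$. This is the size of the order ideal generated by $S$.

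If $S = \iI$ is an ideal, the count is exactly $|\iI|$. Indeed, $f \in \up(e) \cap \iI$ forces $e \le f$, hence $e \in \iI$; conversely every $e \in \iI$ satisfies $e \in \up(e) \cap \iI$.

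For an arbitrary $S$, the resulting bound is implied by the bound for the ideal generated by $S$, together with $x \ge 0$. Since the right-hand side is a generalized permutohedron, it is cut out by the inequalities $x \ge 0$ and $\sum_{e \in S} x_e \le \#\{e : \up(e) \cap S \neq \emptyset\}$ for $S \subseteq E$. This gives the same inequality system as \eqref{eq:Q-tau-ineq-b}. (This step also recovers Theorem~\ref{thm:main-a}.) As a check, for the chain $a<b$ it gives $\mathrm{conv}(0,\epsilon_a,\epsilon_b)+\mathrm{conv}(0,\epsilon_b)$, which is correct.

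\textbf{Step 2: Ehrhart polynomial and volume.} To absorb the vertex $0$, lift to $\RR^{\{0\}\cup E}$. Replace $\Delta_e$ by the face $\Delta_{I_e}$ of the standard simplex, where $I_e = \up(e) \cup \{0\}$. Then $t\qQ_\tau$ is lattice-equivalent to $\sum_e t\,\Delta_{I_e}$, by projecting away the $0$-coordinate on the hyperplane where the coordinates sum to $tn$.

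Postnikov's theorem on lattice points of $\sum_i y_i \Delta_{I_i}$ expresses their number as $\sum_{(a_e)} \prod_e \binom{y_e+a_e-1}{a_e}$. The sum runs over the draconian sequences, namely $a_e \in \NN$ with $\sum_{e \in S} a_e \le |\bigcup_{e\in S} I_e| - 1 = |\up(S)|$ for all $S$. Here $\up(S)$ denotes the order filter generated by $S$.

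The binding constraints are those with $S$ a filter $\mathcal F$ of $\tau$, which read $\sum_{e\in\mathcal F} a_e \le |\mathcal F|$. Filters of $\tau$ are exactly the order ideals of $\tau^\ast$. Hence the draconian sequences are precisely the points of $P_{\tau^\ast}$. Putting $y_e = t$ yields the formula in (a), and $t=1$ gives the equality of lattice point counts.

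For the volume, take the leading coefficient and multiply by $n!$. This gives $\sum \binom{n}{(a_e)}$ over the $(a_e) \in P_{\tau^\ast}$ with $\sum a_e = n$. Given $\sum a_e = n$, the filter inequalities are equivalent to $\sum_{e\in \iI} a_e \ge |\iI|$ for the complementary ideals $\iI$ of $\tau$. The multinomial coefficient counts the words in $E^n$ in which each letter $e$ occurs $a_e$ times, so the volume statement follows.

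\textbf{Step 3: part (b).} The defining conditions of $P_{\tau^\ast}$ are monotone, so $P_{\tau^\ast}$ is an order ideal of $\NN^E$. Now $\zZ(P_{\tau^\ast}, t+1)$ counts multichains $x_1 \le \cdots \le x_t$ in $P_{\tau^\ast}$. Group these by the top element $a = x_t$. The lower elements range over arbitrary multichains in the box $\prod_e [0,a_e]$, all of which lie in $P_{\tau^\ast}$. Counting coordinatewise, their number is $\prod_e \binom{a_e+t-1}{a_e}$, and summing over $a$ matches (a).

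\textbf{Main obstacle.} I expect the main obstacle to be Step~2. Postnikov's theorem must be quoted in the correct form, namely for the ``$P^-$'' versus full Minkowski sum with the extra apex coordinate. One must also verify that the draconian inequalities for non-filter sets $S$ are implied by those for filters. That implication holds because enlarging $S$ to $\up(S)$ can only increase the left-hand side while leaving the right-hand side unchanged.
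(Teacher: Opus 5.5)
Your proposal is correct and follows the paper's proof essentially step for step: the Minkowski decomposition $\qQ_\tau=\sum_{e\in E}\bDelta_{\up(e)}$ lifted to $\RR^{E\sqcup\{0\}}$, Postnikov's draconian formula, the reduction of the draconian inequalities to order filters of $\tau$ (that is, order ideals of $\tau^\ast$), and counting multichains in the order ideal $P_{\tau^\ast}$ of $\NN^E$ by their top element. Reading the volume off the degree-$n$ terms of (a), rather than from maximal chains via (b) as the paper does, gives the same count.

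The one point to tighten is the form of Postnikov's formula you quote, which sums over $(a_e)$ subject only to the inequalities, with no constraint $\sum_e a_e=n$. That form is what \cite[Theorem~11.3]{post09} yields after adjoining an extra left vertex $\ell_0$ with $I_0=E\sqcup\{0\}$ and $y_0=0$; this is the paper's graph $G_\tau$ and Theorem~\ref{thm:post}. The new coordinate $a_0=n-\sum_e a_e\ge 0$ absorbs the slack and contributes weight $\binom{a_0}{a_0}=1$. By contrast, Postnikov's literal draconian sequences on your graph (which must sum to $n$), with the same weights, count the lattice points of the trimmed polytope $P^-$ rather than those of $t\qQ_\tau$.
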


The proofs of both results rely on the theory 
of generalized permutohedra \cite{post09}. An 
interesting feature of our study of preorder 
polytopes is the variety of conjectures on their 
lattice point enumeration that can be stated, 
some of which extend 
Conjecture~\ref{conj:arbor-h} and (partly) 
Conjecture~\ref{conj:arbor-roots} on arbor 
polytopes. 

The content and structure of this paper can be 
summarized as follows. Section~\ref{sec:pre} 
discusses notation and reviews basic 
definitions and background on poset and preposet
combinatorics, Ehrhart theory of lattice 
polytopes, generalized permutohedra and related 
topics. Section~\ref{sec:basic} establishes
basic properties of preorder polytopes and 
their posets of lattice points. 
Theorems~\ref{thm:main-a} and~\ref{thm:main-b}
are proven in Sections~\ref{sec:basic} 
and~\ref{sec:Ehrhart}, respectively, by 
applying results from the theory of generalized 
permutohedra \cite{post09}. 
Section~\ref{sec:Ehrhart} also conjectures
a stronger form of Ehrhart-Zeta duality, as well 
as a combinatorial interpretation of the 
$h^\ast$-polynomial of any preorder polytope
which refines that of the normalized volume
given by Theorem~\ref{thm:main-b} (see 
Conjectures~\ref{conj:matrixEZ} 
and~\ref{conj:h-ast}) and provides partial 
results and a formula for the 
$h^\ast$-polynomial. Ehrhart polynomials of 
preorder polytopes may have nonreal complex 
roots. However, a weaker property (called 
magic positivity) which implies the 
real-rootedness of the $h^\ast$-polynomial 
is conjectured in Section~\ref{sec:Ehrhart} 
(see Conjecture~\ref{conj:magic}). 

Section~\ref{sec:h-vectors} discusses the 
natural extension of 
Conjecture~\ref{conj:arbor-h} for preorder 
polytopes and conjectures the 
$\gamma$-positivity and real-rootedness of
$h$-polynomials of preorders, as well as 
their invariance under duality. 
Section~\ref{sec:reflexive} studies reflexive 
polytopes which can be obtained as 
Minkowski sums of preorder polytopes and 
standard simplices. Section~\ref{sec:double}
provides a formula for the double Ehrhart
polynomial of a preorder polytope and the 
standard simplex and conjectures an identity
(see Conjecture~\ref{conj:double}) which is
stronger than Conjecture~\ref{conj:h-ast}. 
Section~\ref{sec:M-triangles} briefly 
discusses an important enumerative invariant 
of the poset of lattice points of a preorder 
polytope, namely the $M$-triangle. 
Section~\ref{sec:exa} discusses some 
interesting examples and confirms some of 
the conjectures of this paper for them.

\section{Preliminaries}
\label{sec:pre}

This section establishes notation to be followed 
throughout the paper and reviews definitions and
background on the combinatorics of partial orders
and preorders, lattice polytopes and generalized
permutohedra. More information about these topics
can be found in \cite{HiAC,StaCCA,StaEC1,post09}.

\subsection{Notation}
\label{sec:notation}

Let $E$ be a nonempty finite set. We will denote 
by ${({\rm \mathbf{e}}_e)}_{e \in E}$ the standard
basis of the space $\RR^E$. We will use boldface 
letters to denote vectors $\mathbf{x} = 
(x_e)_{e \in E}$ of $\RR^E$, or collections of 
real numbers. We will denote by $|\mathbf{x}|$ 
the sum of the coordinates of a point $\mathbf{x}
\in \RR^E$ and by $|S|$ or $\# S$ the cardinality 
of a finite set $S$.

\subsection{Preorders}
\label{sec:preorders}

Let $E$ be a nonempty finite set. A \emph{preorder} 
on $E$ is defined as a reflexive and transitive 
relation $\le$ on $E$. Declaring two elements $a, b
\in E$ to be equivalent if $a \le b$ and $b \le a$, 
one gets an equivalence relation on $E$. The induced 
order on the set of equivalence classes, which we  
call \emph{vertices}, is a partial order denoted by 
$\preceq$. Thus, a preorder $\le$ on the ground set 
$E$ can also be defined as a partial order $\preceq$ 
on the set of blocks of a partition of $E$. We will 
depict $\le$ by the Hasse diagram 
\cite[Section~3.1]{StaEC1} of the associated partial 
order $\preceq$ (where, as usual, the order increases 
from bottom to top). In the language of 
species~\cite{BLL98}, we are working with the 
composition of the species of partial orders with 
that of nonempty finite sets. Preorders on a finite 
set $E$ are in one-to-one correspondence with 
topologies on $E$ \cite[Exercise~3.3]{StaEC1}. We 
will usually denote preordered finite sets by small 
greek letters. We will refer to the cardinality of 
the ground set $E$ as the \emph{size} of a preorder 
on $E$ and to the cardinality of a vertex as the 
\emph{size} of this vertex. Figure~\ref{fig1} shows 
the Hasse diagram of a preorder of size 5; there is 
one vertex of size two and three single element 
vertices.

Basic constructions for posets 
\cite[Sections~3.1-3.2]{StaEC1}, such as 
disjoint unions, ordinal sums, duality, order 
ideals and filters and principal order ideals and 
filters, extend naturally to preorders. For example, 
an \emph{order ideal} of a preorder on the ground 
set $E$ is defined as any subset $\iI \subseteq E$ 
such that for every $b \in \iI$ we have $a \in \iI$ 
for every $a \in E$ with $a \le b$. Thus, order 
ideals are unions of vertices which are ideals of 
the associated partial order $\preceq$. Similarly, 
an \emph{order filter} (or \emph{dual order ideal}) 
is defined as any subset $\jJ \subseteq E$ such 
that for every $a \in \jJ$ we have $b \in \jJ$ for 
every $b \in E$ with $a \le b$. We will denote by
$\down(e)$ (respectively, $\up(e)$) the principal
order ideal (respectively, principal order filter)
generated by $e \in E$. We will sometimes abuse notation
and write $\down(v)$ and $\up(v)$ for a vertex $v$.

A preorder $\le$ on $E$ is called
a \emph{chain} if $a \le b$ or $b \le a$ for all 
$a, b \in E$ (equivalently, the induced partial 
order $\preceq$ is a total order). Chains are 
preorders associated to linear arbors 
\cite[Section~4]{Cha25b}; the corresponding 
arbor polytopes were studied in~\cite{Ath26} 
and called \emph{composition polytopes}.

\subsection{Poset combinatorics}
\label{sec:poset-comb}

Let $(P, \preceq)$ be a finite poset and let 
$d$ be the length (one less than the cardinality) 
of the longest chain in $P$. The \emph{zeta 
polynomial} \cite[Section~3.12]{StaEC1} of $P$ 
is the unique polynomial $\zZ(P, t)$ 
with the property that $\zZ(P, m+1)$ is equal 
to the number of $m$-element multichains in $P$, 
meaning sequences $(p_1, p_2,\dots,p_m)$ of 
elements of $P$ such that $p_1 \preceq p_2 
\preceq \cdots \preceq p_m$, for every $m \in 
\NN$. The degree of $\zZ(P, t)$ is equal to $d$
and the leading coefficient is equal to the 
number of (necessarily maximal) chains of $P$ 
of length $d$, divided by $d!$. We say that 
$P$ is \emph{graded} if all maximal chains in
$P$ have length $d$.

\subsection{Ehrhart theory}
\label{sec:Ehr-hstar}

Let $\qQ$ be a $d$-dimensional lattice polytope 
in $\RR^n$ (thus, every vertex of $\qQ$ belongs 
to $\ZZ^n$). The \emph{Ehrhart polynomial} of 
$\qQ$ is the unique polynomial $\Ehr(\qQ, t)$ 
with the property that $\Ehr(\qQ, m) = \# 
(m\qQ \cap \NN^n)$ for every $m \in \NN$. The 
Ehrhart polynomial $\Ehr(\qQ, t)$ has degree
$d$ and thus
\begin{equation} \label{eq:def-hstar} 
\sum_{m \ge 0} \Ehr(\qQ, m) t^m = 
\frac{h^\ast(\qQ, t)}{(1-t)^{d+1}}
\end{equation}
for some polynomial $h^\ast(\qQ, t)$ of degree at 
most $d$, called the \emph{$h^\ast$-polynomial}. 
The coefficients of $h^\ast(\qQ, t)$ are in fact 
nonnegative integers \cite[Theorem~33.9]{HiAC} 
\cite[Section~I.5]{StaCCA}. The sequence 
$h^\ast(\qQ) = (h^\ast_0(\qQ), 
h^\ast_1(\qQ),\dots,h^\ast_d(\qQ))$ of 
coefficients of $h^\ast(\qQ, t) = \sum_{i=0}^d 
h^\ast_i(\qQ) t^i$ is the \emph{$h^\ast$-vector} 
of $\qQ$. The leading coefficient of $\Ehr (\qQ, 
t)$ is equal to the $d$-dimensional volume of 
$\qQ$. Multiplied by $d!$, this number is a 
positive integer called the \emph{normalized 
volume}; it is equal to the sum of the entries 
of the $h^\ast$-vector. Thus, the normalized 
volume is refined by the $h^\ast$-vector. 

The polytope $\qQ$ is said to be \emph{Ehrhart 
positive} if all coefficients of $\Ehr(\qQ, t)$
are nonnegative. Finding classes of Ehrhart 
positive lattice polytopes is a very 
interesting problem \cite{FH24, Liu19}.
Following~\cite{FH24}, we say that $\Ehr(\qQ, 
t)$ is \emph{magic positive} if 
\[ \Ehr(\qQ, t) = \sum_{i=0}^d c_i 
   t^i(1+t)^{d-i} \]
for some nonnegative real numbers $c_0,
c_1,\dots,c_d$. By a theorem of Br\"and\'en 
\cite[Theorem~4.2]{Bra06}, this property (which 
is stronger than Ehrhart positivity) implies 
that $h^\ast(\qQ, t)$ has only real roots. 

\subsection{Generalized permutohedra}
\label{sec:genperm}

We briefly recall, with slightly adapted notation, 
the definitions and some basic results from 
Postnikov's theory of generalized permutohedra, as 
introduced in~\cite{post09}. This involves two
distinct notions, namely generalized permutohedra 
and the subclass of $Y$-type generalized 
permutohedra.

Let $E$ be a nonempty finite set. For a nonempty 
subset $I$ of $E$, the standard simplex $\Delta_I$ 
is the convex hull of the set of standard basis 
vectors 
$\{ {\rm \mathbf{e}}_e : e \in I\}$. $Y$-type 
generalized permutohedra are defined as Minkowski 
sums of scaled standard simplices. More precisely, 
given a collection $\mathbf{y} = {\{y_I\}}_I$ of 
nonnegative real numbers indexed by nonempty
subsets of $E$, 
\begin{equation} \label{eq:Py}
\pP^y(\mathbf{y}) = \sum_{\varnothing \not= 
I \subseteq E} y_I \Delta_I
\end{equation}
is the \emph{$Y$-type generalized permutohedron}
associated to $\mathbf{y}$.

On the other hand, generalized permutohedra are 
defined by linear equations and inequalities as 
follows. Given a collection $\mathbf{z} = 
{\{z_I\}}_I$ of nonnegative real numbers indexed 
by nonempty subsets of $E$,
\begin{equation} \label{eq:Pz}
  \pP^z(\mathbf{z}) = \bigg\{(x_e) \in \RR^E 
	\biggm| \sum_{e \in E} x_e = z_{E}, \quad 
	\sum_{i \in I} x_i \ge z_I \quad \forall I \not= 
	\varnothing \bigg\}
\end{equation}
is the \emph{generalized permutohedron} associated 
to $\mathbf{z}$. By~\cite[Proposition~6.3]{post09}, 
every $Y$-type generalized permutohedron is a 
generalized permutohedron. More precisely, 
defining the collection $\mathbf{z}$ from the 
collection $\mathbf{y}$ by setting
\begin{equation} \label{eq:zy}
z_J = \sum_{\varnothing \not= I \subseteq J} y_I
\end{equation}
for nonempty $J \subseteq E$, there is an equality 
of polytopes: $\pP^y(\mathbf{y}) = \pP^z
(\mathbf{z})$.

We now present a basic formula from~\cite{post09} 
for the Ehrhart polynomial of an integral $Y$-type 
generalized permutohedron. Let us consider nonempty 
subsets $I_0, I_1,\dots,I_m$ of $E$ and numbers 
$y_0, y_1,\dots,y_m \in \NN$. It is convenient to 
consider the bipartite graph $G$ with $m+1$ 
vertices $\ell_0, \ell_1,\dots,\ell_m$ on the left 
side and vertices $r_e$ for $e \in E$ on the right 
side for which $\ell_j$ is adjacent to $r_e$ if 
and only if $e \in I_j$. A sequence 
$(a_0, a_1,\dots,a_m) \in \NN^{m+1}$ is called 
\emph{$G$-draconian} \cite[Definition~9.2]{post09} 
if $a_0 + a_1 + \cdots + a_m = |E| - 1$ and 
\begin{equation} \label{eq:draco2}
1 + \sum_{\alpha \in \aA} a_\alpha \le \# 
	  \bigcup_{\alpha \in \aA} I_\alpha
\end{equation}
for every nonempty $\aA \subseteq \{0, 1,\dots,m\}$. 
The following statement is a consequence of 
\cite[Theorem~11.3]{post09}.
\begin{theorem} [\cite{post09}] \label{thm:post}
Assume that the bipartite graph $G$ just 
described has no isolated vertices and that $I_0 = 
E$. Then, for all nonnegative integers $y_j$, the 
Ehrhart polynomial of the $Y$-type generalized 
permutohedron
\[ \qQ = \sum_{j=0}^m y_j \Delta_{I_j} \]
in $\RR_{\ge 0}^E$ is given, by the formula 
\[ \Ehr(\qQ, t) = \sum_{(a_0, a_1,\dots,a_m) \in 
   \gG_G} \binom{y_0 t + a_0}{a_0}
   \prod_{i=1}^m \binom{y_i t + a_i - 1}{a_i}, 
	 \]
where $\gG_G$ stands for the set of $G$-draconian 
sequences. In particular, $\qQ$ is Ehrhart 
positive.
\end{theorem}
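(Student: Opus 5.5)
We plan to deduce the statement from Postnikov's lattice point count for \emph{trimmed} generalized permutohedra, \cite[Theorem~11.3]{post09}, using a shift of the parameter $y_0$ and a dilation. Recall the setup there. For the bipartite graph $G$ with no isolated vertices, let $P_G(y_0,\dots,y_m) = \sum_j y_j \Delta_{I_j}$. The trimmed polytope $P_G^-(y_0,\dots,y_m)$ is characterized by the Minkowski relation $P_G^-(\mathbf{y}) + \Delta_E = P_G(\mathbf{y})$. Theorem~11.3 states that, for nonnegative integers $y_j$, the number of lattice points of $P_G^-(y_0,\dots,y_m)$ equals
\[ \sum_{(a_0,\dots,a_m)\in \gG_G} \prod_{j=0}^m \binom{y_j+a_j-1}{a_j}. \]
We use it as a black box. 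We first check that its hypotheses (no isolated vertices in $G$, integrality of the $y_j$) match those stated here. We also need to confirm that the normalization of $\gG_G$ in \eqref{eq:draco2}, with $\sum a_j = |E|-1$, agrees with Postnikov's convention for $P^-_G$.

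The key step uses the hypothesis $I_0 = E$. It gives $\Delta_{I_0} = \Delta_E$, so for integers $y_j \ge 0$ we have
\[ P_G(y_0+1, y_1,\dots,y_m) = \Big(y_0\Delta_E + \sum_{j\ge 1} y_j \Delta_{I_j}\Big) + \Delta_E. \]
By the defining relation of the trimmed polytope (Minkowski cancellation holds for convex polytopes), this yields
\[ P^-_G(y_0+1,y_1,\dots,y_m) = y_0\Delta_E+\sum_{j\ge1} y_j\Delta_{I_j} = \qQ. \]
Substituting $y_0+1$ for $y_0$ in Postnikov's count turns the $j=0$ factor into $\binom{y_0+a_0}{a_0}$ and leaves the others unchanged. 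This gives $\#(\qQ\cap\ZZ^E)$ in the claimed shape.

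Next we handle dilations. For $t\in\NN$ we have $t\qQ = \sum_j (ty_j)\Delta_{I_j}$, again a Minkowski sum of the same form with integer parameters $ty_j$ and the same graph $G$. The previous paragraph then gives $\#(t\qQ\cap\ZZ^E)$ as the displayed sum with $y_j$ replaced by $y_jt$. The right-hand side is a polynomial in $t$ agreeing with $\Ehr(\qQ,t)$ at all $t\in\NN$, so the two polynomials coincide.

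Ehrhart positivity follows by expanding each factor. We have
\[ \binom{y t+a-1}{a}=\frac{1}{a!}\prod_{k=0}^{a-1}(yt+k) \quad\text{and}\quad \binom{y t+a}{a}=\frac{1}{a!}\prod_{k=1}^{a}(yt+k), \]
and both have nonnegative coefficients in $t$ since $y\ge0$. Products and sums of such polynomials keep this property.

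The main obstacle is the identification $P_G^-(y_0+1,\dots)=\qQ$. It requires matching Postnikov's definition of the trimmed polytope, given by inequalities $\sum_{e\in J}x_e \ge z_J^-$ together with a shifted total sum, with the Minkowski-difference description. If that definition is only given via $\mathbf{z}$, we would verify the identity instead through \eqref{eq:zy}. Concretely, we would check that replacing $y_0$ by $y_0+1$ raises every $z_J$ with $J=E$ by exactly $1$, and that trimming subtracts exactly this amount.
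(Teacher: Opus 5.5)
Your proposal is correct and is exactly the deduction the paper has in mind when it calls the statement a consequence of \cite[Theorem~11.3]{post09}. You replace $y_0$ by $y_0+1$, use $I_0=E$ to identify $P_G^-(y_0+1,y_1,\dots,y_m)$ with $\qQ$, dilate by $t$, and read off positivity from the factors $\binom{yt+a}{a}$ and $\binom{yt+a-1}{a}$. One correction to how you state the definition: Postnikov defines $P_G^-$ as the Minkowski difference $\{x : x+\Delta_E\subseteq P_G\}$, which in general need not satisfy $P_G^-+\Delta_E=P_G$. This does not affect your argument, because the cancellation law for convex polytopes (or equivalently your check via $\mathbf{z}$) gives exactly the identification $P_G^-(y_0+1,y_1,\dots,y_m)=\qQ$ that you need.
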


\section{Basic properties}
\label{sec:basic}

This section establishes basic properties of 
preorder polytopes and their posets of lattice 
points and proves Theorem~\ref{thm:main-a}. 
Throughout this and the following sections 
(unless stated otherwise), $\tau$ will be a 
preorder of size $n$ on the ground set $E$; we 
assume that $E$ does not contain 0. 

\subsection{Elementary properties}
\label{sec:elementary}

We recall that $\qQ_\tau$ is defined by $x_e 
\ge 0$ for every $e \in E$ and the linear 
inequalities~(\ref{eq:Q-tau-ineq-b}) for all 
order ideals $\iI$ of $\tau$, where $(x_e)$ are 
the standard coordinates of $\RR^E$. For our 
running example from Figure~\ref{fig1}, these
inequalities are 
\begin{align*} 
x_a + x_c & \le 2 \\ 
x_b & \le 1 \\ 
x_a + x_b + x_c + x_e & \le 4 \\
x_b + x_d & \le 2 \\
x_a + x_b + x_c + x_d + x_e & \le 5.
\end{align*}
Since $E$ is an order ideal of $\tau$, one of 
the linear inequalities defining $\qQ_\tau$ is 
\[ \sum_{e \in E} x_e \le n \]
and hence $\qQ_\tau$ is indeed a bounded 
polyhedron in $\RR^E$. Clearly, one may disregard 
the inequalities~(\ref{eq:Q-tau-ineq-b}) 
associated to order ideals which are disconnected
as subgraphs of the Hasse diagram of $\tau$,
since these are implied by the ones associated 
to connected order ideals.

\begin{remark} \label{rem:not-simple} \rm
(a) Contrary to the situation with arbor 
polytopes~\cite{Cha25b}, preorder polytopes are
not necessarily simple. This property fails
already for the three-element poset $\{a,b,c\}$ 
with relations $a \prec b$ and $a \prec c$. The 
preorder polytope for Figure~\ref{fig1} is not 
simple either; its $f$-vector is $(1, 27, 69, 72, 
38, 10, 1)$.

(b) The polytope $Q_\tau$ is an anti-blocking polytope, 
in the sense of~\cite{fulkerson}; see also 
\cite[Section~9.3]{schrijver}. This means that 
$Q_\tau \subset \RR^E_{\ge 0}$ and $\mathbf{y} 
\in \qQ_\tau \Rightarrow \mathbf{x} \in 
\qQ_\tau$ whenever $\mathbf{x}, \mathbf{y} \in 
\RR^E_{\ge 0}$ with $\mathbf{x} \le \mathbf{y}$ 
coordinatewise.
\qed
\end{remark}

\subsection{Preorder polytopes as generalized 
permutohedra}
\label{sec:Minkowski}

This section shows that every preorder polytope 
is unimodularly equivalent to a generalized 
permutohedron and deduces Theorem~\ref{thm:main-a}. 
This construction, which is explicit, will be 
essential for other results of this paper as well. 
We will work with a larger family of polytopes 
than that of preorder polytopes, since this will 
be convenient in Sections~\ref{sec:reflexive} and~\ref{sec:double}.

For $I \subseteq E$, let us denote by $\bDelta_I$ 
the lattice simplex in $\RR^E$ with vertices the 
basis vectors ${\rm \mathbf{e}}_e$ 
for $e \in I$ and the zero vector of $\RR^E$. For 
a vertex $v$ of $\tau$, let us denote by 
$\boldsymbol\uU_v$ the $|v|$th Minkowski power 
of the simplex $\bDelta_{\up(v)}$, where $\up(v)$ 
is the principal order filter of $\tau$ generated 
by $v$ (thus, $\up(v)$ is a union of vertices). 
Given $r, s \in \NN$, we consider the Minkowski 
sum
\begin{equation} \label{eq:def-Qrs}
\qQ_\tau(r,s) = r \sum_{v} \boldsymbol\uU_v + s 
               \bDelta_E
\end{equation}
in $\RR^E$, where the sum ranges over all vertices 
$v$ of $\tau$.

\begin{proposition} \label{prop:y-gen}
The polytope $\qQ_\tau(r,s)$ is unimodularly 
equivalent to the $Y$-type generalized permutohedron 
\[ r \sum_{v} \uU_v + s \Delta_{E \sqcup \{0\}} \]
in $\RR^{E \sqcup \{0\}}$, where the sum ranges 
over all vertices $v$ of $\tau$ and $\uU_v$ is the 
$|v|$th Minkowski power of the simplex 
$\Delta_{\up(v) \sqcup \{0\}}$ in 
$\RR^{E \sqcup \{0\}}$.
\end{proposition}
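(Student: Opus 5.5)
The natural approach is the standard homogenization map. Let $N = r\sum_v |v| + s$, which is the total ``degree'' of the Minkowski sum, since each factor $\bDelta_{\up(v)}$ in $\boldsymbol\uU_v$ and each copy of $\bDelta_E$ contributes one unit. Consider the affine map $\phi: \RR^E \to \RR^{E \sqcup \{0\}}$ given by
\[ \phi(\mathbf{x}) = \Big(\mathbf{x},\; N - \sum_{e \in E} x_e\Big), \]
where the last entry is the $0$-coordinate. Its image is the affine hyperplane $H_N = \{\mathbf{y} : |\mathbf{y}| = N\}$. The map $\phi$ is injective and affine, and it restricts to a bijection $\ZZ^E \to \ZZ^{E\sqcup\{0\}} \cap H_N$ whose inverse is the coordinate projection forgetting the $0$-coordinate. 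This is exactly what ``unimodularly equivalent'' means for polytopes of different ambient dimension: $\phi$ is a lattice-preserving affine isomorphism onto the affine span.

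The key step is to check that $\phi$ carries the Minkowski sum to the Minkowski sum, factor by factor. For a single simplex, define the degree-one version $\phi_1(\mathbf{x}) = (\mathbf{x}, 1 - |\mathbf{x}|)$. It sends the vertices $\mathbf{0}$ and ${\rm \mathbf{e}}_e$ ($e \in I$) of $\bDelta_I$ to ${\rm \mathbf{e}}_0$ and ${\rm \mathbf{e}}_e$. Since $\phi_1$ is affine, $\phi_1(\bDelta_I) = \Delta_{I \sqcup \{0\}}$. Now observe the identity
\[ \phi\Big(\sum_k \mathbf{x}_k\Big) = \sum_k \phi_1(\mathbf{x}_k) \]
for any $N$ points $\mathbf{x}_1,\dots,\mathbf{x}_N$. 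Both sides have first block $\sum_k \mathbf{x}_k$ and $0$-coordinate $N - \sum_k|\mathbf{x}_k|$. Writing $\qQ_\tau(r,s)$ as a Minkowski sum of exactly $N$ simplices, namely $r|v|$ copies of $\bDelta_{\up(v)}$ for each vertex $v$ and $s$ copies of $\bDelta_E$, we obtain
\[ \phi(\qQ_\tau(r,s)) = r\sum_v \uU_v + s\,\Delta_{E\sqcup\{0\}}. \]
The inclusion ``$\subseteq$'' follows from the identity applied to the summands. The inclusion ``$\supseteq$'' holds because every point of the right-hand Minkowski sum is a sum of points $\phi_1(\mathbf{x}_k)$ with $\mathbf{x}_k$ in the corresponding $\bDelta$. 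The right-hand side is visibly a $Y$-type generalized permutohedron in the sense of \eqref{eq:Py}, with $y_{\up(v)\sqcup\{0\}} = r|v|$ and $y_{E\sqcup\{0\}} = s$, summing coefficients when two vertices have the same up-set. Note that $\up(v)$ is nonempty, so each index set is nonempty as required.

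There is no serious obstacle. The only points needing care are the following:
\begin{itemize}
\item Bookkeeping of the total degree $N$, so that the single map $\phi$ works simultaneously for all summands.
\item Stating precisely the sense of unimodular equivalence, namely a lattice-preserving affine bijection between the lattice of $\RR^E$ and the lattice points of $H_N$. Verifying this amounts to noting that the projection forgetting the $0$-coordinate is the inverse of $\phi$ and preserves integrality in both directions.
\end{itemize}
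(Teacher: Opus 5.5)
Your proof is correct and is essentially the paper's argument. The paper uses the projection forgetting the $0$-coordinate, which is exactly the inverse of your map $\phi$; it identifies each $\Delta_{I\sqcup\{0\}}$ with $\bDelta_I$ and, being linear, commutes with Minkowski sums. Your explicit bookkeeping of the total degree $N = rn+s$ makes precise a point the paper's short proof leaves implicit: the Minkowski sum lies in the hyperplane $|\mathbf{y}| = N$, not $|\mathbf{y}| = 1$.
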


\begin{proof}
The projection $\rho: \RR^{E \sqcup \{0\}} \to 
\RR^E$ which forgets the zeroth coordinate induces 
a bijection from the hyperplane consisting of all 
points in $\RR^{E \sqcup \{0\}}$ with sum of 
coordinates equal to 1 onto $\RR^E$ and a 
unimodular equivalence between the lattice simplices
$\Delta_{I \sqcup \{0\}} \subset \RR^{E \sqcup \{0\}}$ 
and $\bDelta_I \subset \RR^E$ for every $I \subseteq 
E$. Thus, $\rho$ induces the claimed unimodular 
equivalence as well.
\end{proof}

Let us define a collection $\mathbf{y} = \mathbf{y}
(r,s)$ of nonnegative integers, indexed by nonempty 
subsets of $E \sqcup \{0\}$, by $y_I = 0$ for every 
nonempty $I \subseteq E$ and
\begin{equation} \label{eq:def-y(r,s)}
y_{I \sqcup \{0\}} = \begin{cases}
                      r|v| + s \delta_{I, E}, 
\text{if} \ I = \up(v) \ \text{for some vertex} \ 
        v \ \text{of} \ \tau, \\
s \delta_{I, E}, \text{otherwise}
\end{cases}
\end{equation}
for every $I \subseteq E$, where 
\[ \delta_{I, E} = \begin{cases}
   1, \text{if} \ I = E, \\
   0, \text{ otherwise.} 
	 \end{cases} \]
We have just shown that $\qQ_\tau(r,s)$ can be 
identified with the $Y$-type generalized 
permutohedron $\pP^y(\mathbf{y})$. The following 
statement describes explicitly $\qQ_\tau(r,s)$ 
in terms of linear inequalities.

\begin{proposition} \label{prop:Q(r,s)-ineq}
The polytope $\qQ_\tau(r,s)$ is defined by the 
inequalities $x_e \ge 0$ 
for every $e \in E$ and 
\begin{equation} \label{eq:Q(r,s)-ineq-b}
\sum_{e \in \iI} x_e \le r |\iI| + s 
\end{equation}
for every order ideal $\iI$ of $\tau$. 
In particular, $\qQ_\tau(r,s)$ coincides with the 
preorder polytope $\qQ_\tau$ for $r=1$ and $s=0$. 
\end{proposition}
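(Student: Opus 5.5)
The plan is to use the identification $\qQ_\tau(r,s) \cong \pP^y(\mathbf{y})$ from Proposition~\ref{prop:y-gen}, together with Postnikov's description \eqref{eq:zy} of $Y$-type generalized permutohedra by inequalities, and then to translate back to $\RR^E$ via the projection $\rho$.

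\textbf{Step 1: compute $\mathbf{z}$.} Compute $z_J = \sum_{\varnothing \ne I \subseteq J} y_I$ for every nonempty $J \subseteq E \sqcup \{0\}$. Only subsets containing $0$ carry weight. If $0 \notin J$, then $z_J = 0$. If $J = K \sqcup \{0\}$, then
\[ z_J = \sum_{v\,:\, \up(v) \subseteq K} r|v| + s\,\delta_{K,E}. \]
The condition $\up(v) \subseteq K$ for a vertex $v$ means that $v$ lies in the largest order filter $\jJ(K)$ contained in $K$. Hence $z_J = r|\jJ(K)| + s\delta_{K,E}$, and in particular $z_{E\sqcup\{0\}} = rn + s$.

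\textbf{Step 2: translate to $\RR^E$.} By \eqref{eq:Pz}, the polytope is cut out by $x_0 + \sum_{e\in E} x_e = rn+s$ and the inequalities $\sum_{i \in J} x_i \ge z_J$. Those with $0 \notin J$ reduce to $x_e \ge 0$, together with consequences of these (the singletons give exactly $x_e \ge 0$). The inequality $x_0 \ge 0$ comes from $J=\{0\}$, where $z_{\{0\}}$ is $s$ if $E=\varnothing$ and $0$ otherwise.

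For $J = K \sqcup \{0\}$ with $K \ne E$, substitute $x_0 = rn+s - \sum_{e\in E} x_e$. This turns the inequality into
\[ \sum_{e \in E\setminus K} x_e \le rn + s - r|\jJ(K)| = r|E \setminus \jJ(K)| + s. \]
Set $\iI = E \setminus K$. Then $E\setminus \jJ(K)$ is the smallest order ideal $\bar\iI$ containing $\iI$. So the inequality reads $\sum_{e\in\iI} x_e \le r|\bar\iI| + s$.

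\textbf{Step 3: reduce to order ideals.} When $\iI$ is itself an order ideal, Step 2 gives exactly \eqref{eq:Q(r,s)-ineq-b}. For $K=\varnothing$ we get $\iI = E$ and the bound $rn+s$, which matches. The inequality $x_0 \ge 0$ becomes $\sum_E x_e \le rn+s$, again the case $\iI=E$. For a general subset $\iI$, the inequality is implied by the one for $\bar\iI$ together with $x_e \ge 0$, because $\sum_{\iI} x_e \le \sum_{\bar\iI} x_e$. Thus the full system is equivalent to $x_e\ge 0$ plus \eqref{eq:Q(r,s)-ineq-b} over order ideals.

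Setting $r=1$ and $s=0$ recovers \eqref{eq:Q-tau-ineq-b}.

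\textbf{Main obstacle.} The delicate step is the bookkeeping in Step 1: identifying $\{v : \up(v)\subseteq K\}$ with the vertices of the largest filter inside $K$, and correctly handling the $\delta_{K,E}$ term and the degenerate index sets ($J=\{0\}$, $K=E$). A second point to check is that Postnikov's Proposition~6.3 applies with the convention that $z_J$ is defined for all nonempty $J$, including those with $y$-sum zero. This holds as stated in the preliminaries.
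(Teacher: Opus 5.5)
Your proposal is correct and follows the same route as the paper. You compute $\mathbf{z}$ via \eqref{eq:zy} and identify $z_{K\sqcup\{0\}}$ as $r$ times the size of the largest order filter contained in $K$, plus $s\,\delta_{K,E}$; you then eliminate $x_0$ using $x_0+\sum_{e\in E}x_e=rn+s$, and note that the inequality for a non-ideal $\iI$ follows from the one for the order ideal it generates. The only loose end is that you never say explicitly that the case $K=E$ gives a trivial inequality (it is implied by that same equation), which the paper notes in one line.
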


\begin{proof}
Let $\mathbf{z} = \mathbf{z}(r,s)$ be the 
collection of nonnegative integers, indexed 
by nonempty subsets of $E \sqcup \{0\}$, defined
by~(\ref{eq:zy}). Then, $z_I = 0$ for every 
nonempty $I \subseteq E$ and 
\[ z_{I \sqcup \{0\}} = r \left( \,
   \sum_{v \, : \, \up(v) \subseteq I} |v| 
	 \right) + s \delta_{I,E} \]
for every $I \subseteq E$. In particular,
\begin{equation} \label{eq:zn}
z_{E \sqcup \{0\}} = r \left( \, \sum_v |v| 
\right) + s = rn+s,
\end{equation}
where the sum ranges over all vertices $v$ of 
$\tau$. By~(\ref{eq:Pz}), the unique linear 
equation defining the generalized permutohedron 
$\pP^y(\mathbf{y})$ becomes 
\begin{equation} \label{eq:eliminate}
x_0 + \sum_{e \in E} x_e = rn + s,
\end{equation}
which we will use to eliminate $x_0$ in the 
linear inequalities associated with nonempty 
subsets of $E \sqcup \{0\}$. Clearly, the 
linear inequalities of~(\ref{eq:Pz}) for nonempty
$I \subseteq E$ amount to $x_e \ge 0$ for every 
$e \in E$, since $z_I = 0$ for all such $I$.

We now consider a subset $J \sqcup \{0\}$, where 
$J \subseteq E$; the associated inequality is 
\begin{equation} \label{eq:associate}
x_0 + \sum_{e \in J} x_e \ge 
z_{J \sqcup \{0\}}. 
\end{equation}
Let $\jJ$ be the maximal order filter of $\tau$ 
contained in $J$; in other words, $\jJ$ is the 
union of all vertices $v$ of $\tau$ such that 
$\up(v) \subseteq J$. Then, 
\[ z_{J \sqcup \{0\}} = r \left( \,
   \sum_{v \, : \, \up(v) \subseteq J} |v| 
	 \right) + s \delta_{J,E} = r |\jJ| + s 
	 \delta_{J,E} \]
and hence, in view of 
Equation~(\ref{eq:eliminate}), the inequality 
(\ref{eq:associate}) is equivalent to
\[ \sum_{e \in I} x_e \le r |\iI| + 
   s(1-\delta_{J,E}), \]
where $I$ and $\iI$ are the complements of $J$ 
and $\jJ$ in $E$, respectively. The special case 
when $J=E$ gives a trivial inequality and can be 
ignored. Otherwise, this inequality 
yields~(\ref{eq:Q(r,s)-ineq-b}) when $J = \jJ$
is an order filter of $\tau$ itself, so that $I 
= \iI$ is an order ideal. It is also implied 
by~(\ref{eq:Q(r,s)-ineq-b}) since $I \subseteq 
\iI$ and hence 
\[ \sum_{e \in I} x_e \le \sum_{e \in \iI} x_e. 
   \]
Since complementation induces a one-to-one 
correspendence between order ideals and order 
filters of $\tau$, the proof follows from our 
considerations and Proposition~\ref{prop:y-gen}.
\end{proof}

\begin{proof}[Proof of Theorem~\ref{thm:main-a}]
Combining Propositions~\ref{prop:y-gen} 
and~\ref{prop:Q(r,s)-ineq} shows that $\qQ_\tau
= \qQ_\tau(1,0)$ is a lattice polytope which is 
unimodularly equivalent to a $Y$-type generalized 
permutohedron. The dimension of $\qQ_\tau$ is 
equal to $n$ since it contains the $n$-dimensional 
cube $[0,1]^E$. 
\end{proof}

\subsection{Lattice points}
\label{sec:lattice}

As mentioned in the introduction, we denote by 
$P_\tau$ the set of lattice points of $\qQ_\tau$
and consider it a subposet of the product poset 
$\NN^E$. Thus, for elements $\mathbf{a} = (a_e)$
and $\mathbf{b} = (b_e)$ of $P_\tau$ we have 
$\mathbf{a} \preceq \mathbf{b}$ if and only if 
$a_e \le b_e$ for every $e \in E$. In our 
running example from Figure~\ref{fig1}, $P_\tau$
has cardinality 92.

The poset $P_\tau$ is an order ideal of $\NN^E$. 
In particular, the zero vector $\mathbf{0}$ is 
its minimum element and every interval 
$[\mathbf{0},x]$ is a product of chains. 
Moreover, $P_\tau$ is graded with rank function 
given by the sum of coordinates. The following 
multiplicative property is a direct consequence 
of the relevant definitions.

\begin{proposition} \label{prop:prod}
Let $\tau$ be the disjoint union of two preorders 
$\tau_1$ and $\tau_2$. Then: 
\begin{itemize}
\itemsep=0pt
\item[{\rm (a)}]
$\qQ_\tau = \qQ_{\tau_1} \times \qQ_{\tau_2}$.

\item[{\rm (b)}]
$P_\tau = P_{\tau_1} \times P_{\tau_2}$.
\end{itemize}
\end{proposition}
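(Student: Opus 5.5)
The plan is to compare the defining inequalities of the three objects directly. Write $E = E_1 \sqcup E_2$, where $\tau_i$ is a preorder on $E_i$, so that $\RR^E = \RR^{E_1} \times \RR^{E_2}$. Everything rests on the following observation about order ideals.

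\textbf{Order ideals split.} A subset $\iI \subseteq E$ is an order ideal of $\tau$ if and only if $\iI = \iI_1 \sqcup \iI_2$, where each $\iI_i = \iI \cap E_i$ is an order ideal of $\tau_i$. This holds because no element of $E_1$ is comparable to an element of $E_2$ in the disjoint union. Consequently the inequality~\eqref{eq:Q-tau-ineq-b} attached to $\iI$ reads
\[ \sum_{e \in \iI_1} x_e + \sum_{e \in \iI_2} x_e \le |\iI_1| + |\iI_2|. \]

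\textbf{Part (a).} For the inclusion $\qQ_{\tau_1} \times \qQ_{\tau_2} \subseteq \qQ_\tau$, take $(\mathbf{x}_1, \mathbf{x}_2)$ in the product. The displayed inequality is then the sum of the inequality for $\iI_1$ in $\qQ_{\tau_1}$ and the inequality for $\iI_2$ in $\qQ_{\tau_2}$. If one of the $\iI_i$ is empty, the corresponding inequality is the trivial $0 \le 0$, so the argument still applies. The nonnegativity conditions agree on both sides.

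For the reverse inclusion $\qQ_\tau \subseteq \qQ_{\tau_1} \times \qQ_{\tau_2}$, take $\mathbf{x} \in \qQ_\tau$ and an order ideal $\iI_1$ of $\tau_1$. Since $\iI_1$ (that is, $\iI_1 \sqcup \varnothing$) is itself an order ideal of $\tau$, its inequality holds. Hence the restriction of $\mathbf{x}$ to $\RR^{E_1}$ lies in $\qQ_{\tau_1}$, and the same argument applies to $E_2$.

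\textbf{Part (b).} This follows by taking lattice points on both sides of (a). Under the identification $\ZZ^E = \ZZ^{E_1} \times \ZZ^{E_2}$, the product order on $\NN^E$ is exactly the product of the product orders on $\NN^{E_1}$ and $\NN^{E_2}$. Therefore the subposet $P_\tau$ is the product poset $P_{\tau_1} \times P_{\tau_2}$.

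There is no real obstacle here. The only point requiring care is the decomposition of order ideals, including the cases where one part is empty.
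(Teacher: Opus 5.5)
Your proposal is correct and matches the paper's approach. The paper gives no written proof and simply calls the statement a direct consequence of the definitions; your argument supplies exactly that verification, namely that order ideals of the disjoint union split as $\iI_1 \sqcup \iI_2$, that the inequalities add or restrict accordingly, and that lattice points and the product order are compatible with $\ZZ^E = \ZZ^{E_1} \times \ZZ^{E_2}$.
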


The polytope $\qQ_\tau$ has no interior lattice 
points, but it does have some once scaled by 2, 
such as the point $(1, 1,\dots,1)$. Let us call 
\emph{upper boundary} of $\qQ_\tau$ the set of 
its points which satisfy at least one of the 
defining inequalities~\eqref{eq:Q-tau-ineq-b} 
as an equality and denote this set by 
$\hat{\partial}(\qQ_\tau)$.

\begin{proposition} \label{prop:upper}
The number of interior lattice points of $2 
\qQ_{\tau}$ is equal to the number of lattice 
points of $\qQ_\tau$ not in $\hat{\partial}
(\qQ_\tau)$.
\end{proposition}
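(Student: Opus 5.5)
The plan is to construct an explicit bijection between interior lattice points of $2\qQ_\tau$ and lattice points of $\qQ_\tau$ not in $\hat{\partial}(\qQ_\tau)$. Interior lattice points $\mathbf{b}$ of $2\qQ_\tau$ are characterized by strict inequalities: $b_e > 0$ for all $e \in E$, and $\sum_{e \in \iI} b_e < 2|\iI|$ for every nonempty order ideal $\iI$. Here we use that $2\qQ_\tau$ is full-dimensional by Theorem~\ref{thm:main-a}, so its interior is cut out by making all defining inequalities strict. The empty ideal gives the trivial inequality and is dropped. Since everything is integral, these conditions become $b_e \ge 1$ and $\sum_{e\in\iI} b_e \le 2|\iI|-1$.

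The natural map is the translation $\mathbf{b} \mapsto \mathbf{a} = \mathbf{b} - (1,1,\dots,1)$. Under it, $a_e \ge 0$ for all $e$, and
\[ \sum_{e\in\iI} a_e = \sum_{e\in\iI} b_e - |\iI| \le |\iI| - 1 < |\iI| \]
for every nonempty order ideal $\iI$. So $\mathbf{a}$ is a lattice point of $\qQ_\tau$ satisfying every inequality~\eqref{eq:Q-tau-ineq-b} strictly, which means $\mathbf{a} \notin \hat{\partial}(\qQ_\tau)$. Conversely, for a lattice point $\mathbf{a}$ of $\qQ_\tau$ outside the upper boundary, integrality gives $\sum_{e\in\iI}a_e \le |\iI|-1$ for every nonempty ideal. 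Then $\mathbf{b}=\mathbf{a}+\mathbf{1}$ has positive coordinates and satisfies $\sum_{e \in \iI} b_e \le 2|\iI|-1 < 2|\iI|$, so it is an interior lattice point of $2\qQ_\tau$. The two maps are mutually inverse translations, which gives the required equality of cardinalities.

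The main point needing care is the description of the interior via strict inequalities. This requires that no defining inequality holds with equality on all of $2\qQ_\tau$, which follows from full-dimensionality: the cube $[0,1]^E$ lies in $\qQ_\tau$, so every facet-defining or redundant inequality is strict somewhere. The rest is routine, including the convention that the upper boundary refers only to inequalities~\eqref{eq:Q-tau-ineq-b} for nonempty order ideals, not to the coordinate inequalities $x_e \ge 0$.
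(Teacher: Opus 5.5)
Your proposal is correct and follows the same approach as the paper, which proves this via the bijection given by translation by $(1,1,\dots,1)$. You fill in the details the paper leaves implicit: the strict-inequality description of the interior, integrality, and the convention that only nonempty order ideals count for $\hat{\partial}(\qQ_\tau)$. One small point: excluding boundary points only needs each inequality to have a nonzero normal vector, not full-dimensionality, though your conclusion is unaffected.
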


\begin{proof}
There is a bijection between the two sets of 
lattice points, given by translation by the 
vector $(1, 1,\dots,1)$.
\end{proof}

\section{Ehrhart theory}
\label{sec:Ehrhart}

This section studies the Ehrhart polynomial of 
the preorder polytope $\qQ_\tau$ and proves 
Theorem~\ref{thm:main-b}. An explicit 
combinatorial interpretation of the 
$h^\ast$-polynomial of $\qQ_\tau$ is conjectured,
partial results are given in this direction and 
a conjectural generalization of part (b) of 
Theorem~\ref{thm:main-b} (Ehrhart-Zeta duality) 
is included.

\subsection{Ehrhart-Zeta duality}
\label{sec:EZ}

\begin{figure}[!h]
\centering
\begin{tikzpicture}[scale=0.5]
\tikzstyle{every node}=[draw,shape=circle, thick]

\node [draw, color=green] at (0,0) (e) {$\ell_e$};
\node [draw, color=magenta] at (0,2) (d) {$\ell_d$};
\node at (0,4) (c) {$\ell_c$};
\node [draw, color=blue] at (0,6) (b) {$\ell_b$};
\node [draw, color=red] at (0,8) (a) {$\ell_a$};
\node at (0,10) (l0) {$\ell_0$};

\node at (4,0) (re) {$r_e$};
\node at (4,2) (rd) {$r_d$};
\node at (4,4) (rc) {$r_c$};
\node at (4,6) (rb) {$r_b$};
\node at (4,8) (ra) {$r_a$};
\node at (4,10) (r0) {$r_0$};

\path [draw, dotted] (l0) -- (ra);
\path [draw, dotted] (l0) -- (rb);
\path [draw, dotted] (l0) -- (rc);
\path [draw, dotted] (l0) -- (rd);
\path [draw, dotted] (l0) -- (re);

\path [draw, dotted] (r0) -- (a);
\path [draw, dotted] (r0) -- (b);
\path [draw, dotted] (r0) -- (c);
\path [draw, dotted] (r0) -- (d);
\path [draw, dotted] (r0) -- (e);

\path [draw, color=red] (a) -- (ra);
\path [draw, color=red] (a) -- (rc);
\path [draw, color=red] (a) -- (re);

\path [draw, color=blue] (b) -- (rb);
\path [draw, color=blue] (b) -- (rd);
\path [draw, color=blue] (b) -- (re);

\path [draw] (c) -- (ra);
\path [draw] (c) -- (rc);
\path [draw] (c) -- (re);

\path [draw, color=magenta] (d) -- (rd);

\path [draw, color=green] (e) -- (re);

\end{tikzpicture}
\caption{Example of bipartite graph $G_\tau$.}
\label{fig2}
\end{figure}

Let us define a bipartite graph $G_\tau$ with 
$n+1$ left and $n+1$ right vertices as follows. 
There are vertices $\ell_e$ on the left side 
and $r_e$ on the right side, for $e \in E 
\sqcup \{0\}$. Vertex $\ell_0$ is connected by 
an edge to all vertices on the right and $r_0$ 
is connected by an edge to all vertices on the 
left. For every $e \in E$, there are edges from 
$\ell_e$ to all vertices $r_f$ such that $f$ 
belongs to the principal order filter $\up(e)$ 
of $\tau$. The graph $G_\tau$ has no isolated 
vertex; it is shown in Figure~\ref{fig2} in the 
case of our running example from 
Figure~\ref{fig1}. We set
\[ I_e = \{f \in E \sqcup \{0\} \mid \{\ell_e, 
   r_f\} \ \text{is an edge of} \ G_\tau \} \]
for $e \in E \sqcup \{0\}$. Equivalently, $I_e 
= \up(e) \sqcup \{0\}$ for every $e \in E$ and 
$I_0 = E \sqcup \{0\}$. According to 
Propositions~\ref{prop:y-gen} 
and~\ref{prop:Q(r,s)-ineq}, the preorder polytope
$\qQ_\tau$ is unimodularly equivalent to the 
$Y$-type generalized permutohedron
\[ \sum_{e \in E \sqcup \{0\}} y_e \Delta_{I_e} 
   \]
in $\RR_{\ge 0}^{E \sqcup \{0\}}$ with parameters 
$y_0 = 0$ and $y_e = 1$ for every $e \in E$. To 
apply Theorem~\ref{thm:post}, we need to 
understand the $G_\tau$-draconian sequences. 
\begin{lemma} \label{lem:tau-drac}
A sequence $(a_e)_{e \in E \sqcup \{0\}} \in 
\NN^{E \sqcup \{0\}}$ is $G_\tau$-draconian if 
and only if 
\begin{equation} \label{eq:draco1}
a_0 + \sum_{e \in E} a_e = n
\end{equation}
and $(a_e)_{e \in E}$ is a lattice point of 
$\qQ_{\tau^\ast}$.
\end{lemma}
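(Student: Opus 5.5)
The plan is to unwind the definition of a $G_\tau$-draconian sequence (the condition~\eqref{eq:draco2} together with the sum condition) for the specific family of sets $I_e$, and to show that the inequalities left after discarding the redundant ones are exactly the defining inequalities of $\qQ_{\tau^\ast}$. Here the index set is $E \sqcup \{0\}$, which has $n+1$ elements. So the sum condition in the definition of draconian sequences reads $\sum_{e \in E \sqcup \{0\}} a_e = (n+1)-1 = n$, which is exactly~\eqref{eq:draco1}. It remains to analyze the inequalities~\eqref{eq:draco2}, one for each nonempty $\aA \subseteq E \sqcup \{0\}$, under the assumption that~\eqref{eq:draco1} holds.

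\emph{Step 1: subsets containing $0$ are automatic.} If $0 \in \aA$, then $\bigcup_{\alpha \in \aA} I_\alpha \supseteq I_0 = E \sqcup \{0\}$, so the right-hand side of~\eqref{eq:draco2} is $n+1$. The inequality then says $\sum_{\alpha \in \aA} a_\alpha \le n$, which follows from~\eqref{eq:draco1} because all $a_\alpha$ are nonnegative.

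\emph{Step 2: subsets of $E$.} For nonempty $\aA \subseteq E$ we have
\[ \bigcup_{e \in \aA} I_e = \bigcup_{e \in \aA} \up(e) \sqcup \{0\} = \up(\aA) \sqcup \{0\}, \]
where $\up(\aA)$ denotes the order filter of $\tau$ generated by $\aA$. Hence~\eqref{eq:draco2} becomes
\[ \sum_{e \in \aA} a_e \le |\up(\aA)|. \]
I claim that this family of inequalities, over all nonempty $\aA \subseteq E$, is equivalent to the family $\sum_{e \in \jJ} a_e \le |\jJ|$ over all nonempty order filters $\jJ$ of $\tau$.
\begin{itemize}
\item Taking $\aA = \jJ$ gives the forward direction, since $\up(\jJ) = \jJ$ for an order filter $\jJ$.
\item Conversely, $\aA \subseteq \up(\aA)$ and $a_e \ge 0$ give $\sum_{e \in \aA} a_e \le \sum_{e \in \up(\aA)} a_e \le |\up(\aA)|$.
\end{itemize}

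\emph{Step 3: translate to $\tau^\ast$ and check the converse direction.} Order filters of $\tau$ are precisely the order ideals of the dual preorder $\tau^\ast$. Therefore, given~\eqref{eq:draco1}, the sequence is draconian if and only if $(a_e)_{e \in E}$ is a nonnegative integer point satisfying~\eqref{eq:Q-tau-ineq-b} for $\tau^\ast$, that is, a lattice point of $\qQ_{\tau^\ast}$. (The empty ideal gives a trivial inequality.)

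For the converse, start from a lattice point $(a_e)_{e \in E}$ of $\qQ_{\tau^\ast}$ and define $a_0$ by~\eqref{eq:draco1}. We must check that $a_0 \in \NN$. This holds because $E$ is itself an order ideal of $\tau^\ast$, so $\sum_{e \in E} a_e \le n$.

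I expect no real obstacle here. The only point needing care is Step 2: the draconian condition quantifies over arbitrary subsets $\aA$, whereas the polytope only has inequalities indexed by order ideals of $\tau^\ast$. The nonnegativity of the $a_e$, combined with replacing $\aA$ by the filter $\up(\aA)$ it generates, is exactly what reconciles the two.
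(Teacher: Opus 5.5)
Your proof is correct and follows the paper's argument essentially step for step. You discard the subsets containing $0$ using the sum condition, reduce the inequality for an arbitrary $\aA \subseteq E$ to the one for the filter $\up(\aA)$ via nonnegativity, and identify filters of $\tau$ with ideals of $\tau^\ast$. Your extra check that $a_0 \in \NN$ is not needed for the lemma as stated, since the sequence is assumed to lie in $\NN^{E \sqcup \{0\}}$. It is, however, what makes the correspondence between draconian sequences and lattice points of $\qQ_{\tau^\ast}$ a bijection in the proof of Theorem~\ref{thm:main-b}.
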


\begin{proof}
According to the definition (see 
Section~\ref{sec:genperm}), $(a_e)_{e \in E 
\sqcup \{0\}} \in \NN^{E \sqcup \{0\}}$ is 
$G_\tau$-draconian if and only if 
Equation~\eqref{eq:draco1} is valid 
and~\eqref{eq:draco2} holds for every nonempty 
$\aA \subseteq E \sqcup \{0\}$. Let us analyze 
these inequalities and show that some of them 
are superflous.

Since $I_0 = E \sqcup \{0\}$, if $0 \in \aA$ 
then \eqref{eq:draco2} becomes $\sum_{e \in \aA} 
a_e \le n$, which is a consequence 
of~\eqref{eq:draco1}. Thus, we may assume that 
$\aA \subseteq E$. Then, we get
\begin{equation} \label{eq:minidraco1}
  \sum_{e \in \aA} a_e \leq |\up(\aA)|,
\end{equation}
where $\up(\aA)$ is the order filter of $\tau$ 
generated by $\aA$. In particular, if $\aA$ is 
an order filter $\jJ$ itself, so that $\up(\aA) 
= \jJ$, we get
\begin{equation} \label{eq:minidraco2}
\sum_{e \in \jJ} a_e \le |\jJ|.
\end{equation}
In general,
$ \sum_{e \in \aA} a_e \le \sum_{e \in \up(\aA)} 
   a_e$,
so \eqref{eq:minidraco1} follows from 
\eqref{eq:minidraco2} for $\jJ = \up(\aA)$. As
a conclusion, the $G_\tau$-draconian conditions 
are equivalent to Equation~\eqref{eq:draco1} 
and the validity of~\eqref{eq:minidraco2} for
all order filters $\jJ$ of $\tau$. The latter 
means that $(a_e)_{e \in E}$ is a lattice point 
of $\qQ_{\tau^\ast}$ and the proof follows. 
\end{proof}

We are now in a position to prove 
Theorem~\ref{thm:main-b}.
\begin{proof}[Proof of Theorem~\ref{thm:main-b}]
The formula of part (a) for the Ehrhart 
polynomial follows directly by combining 
Theorem~\ref{thm:post}, applied to the graph 
$G_\tau$ for the parameters $y_0 = 0$ and $y_e = 
1$ for every $e \in E$, with 
Lemma~\ref{lem:tau-drac}. 

To deduce part (b), let us count the 
$m$-multichains in the poset $P_{\tau^\ast}$ for 
$m \ge 1$. One can can condition on the top element 
$\mathbf{a} = (a_e)_{e \in E} \in P_{\tau^\ast}$ of 
such a multichain. Since the interval 
$[0,\mathbf{a}]$ is isomorphic to a product of 
chains of lengths $a_e$, the number of 
$m$-multichains in $P_{\tau^\ast}$ with top 
element $\mathbf{a}$ is equal to the product 
$\prod_{e \in E} \binom{m+a_e-1}{a_e}$. As a 
result, 
\[ \zZ(P_{\tau^\ast},m+1) = 
   \sum_{(a_e) \in P_{\tau^\ast}} \prod_{e \in E} 
	 \binom{m+a_e-1}{a_e} \]
for every $m \ge 1$ and part (b) follows by 
comparison with the formula of part (a).

We may now deduce the last statement of part (a) 
as follows. By part (b), the normalized volume 
of $\qQ_\tau$ is equal to $n!$ times the 
coefficient of $t^n$ in $\zZ(P_{\tau^\ast},t+1)$ 
and hence to the number of chains of 
$P_{\tau^\ast}$ of length $n$ (since 
$P_{\tau^\ast}$ is graded of rank $n$, these are 
exactly the maximal chains of $P_{\tau^\ast}$).
To every such chain, one can associate the word 
of length $n$ in the alphabet $E$ which records 
at each step of the chain the unique coordinate 
which increases. The resulting map is one-to-one
and, by definition of $\qQ_{\tau^\ast}$, the 
words thus obtained are those words $w \in E^n$ 
such that the elements of any order ideal $\jJ$ 
of $\tau^\ast$ appear at most $|\jJ|$ times in 
$w$. Equivalently, the elements of any order 
ideal $\iI$ of $\tau$ should appear at least 
$|\iI|$ times in $w$. 
\end{proof}

\subsection{Matrix Ehrhart-Zeta duality}
\label{sec:matrixEZ}

One can extend Ehrhart-Zeta duality in terms of 
infinite matrices as follows. For $k, \ell \in 
\NN$ we denote by $\nabla_\tau(k,\ell)$ the 
number of $k$-multichains in the poset of 
lattice points of $\ell \qQ_\tau$ (where the 
order is induced from $\NN^n$, as usual). This
function 
can be viewed as an infinite matrix with rows
and columns indexed by nonnegative integers, 
every column of which is the sequence of values 
of a zeta polynomial. We have $\nabla_\tau(k,0) 
= \nabla_\tau(0,\ell) = 1$, $\nabla_\tau(k,1) = 
\zZ(P_\tau,k+1)$ and $\nabla_\tau(1,\ell) = 
\Ehr(\qQ_\tau,\ell)$ for all $k, \ell \in \NN$.

The following conjecture generalizes part (b) 
of Theorem~\ref{thm:main-b}; it has been verified 
by computer on certain finite pieces of 
$\nabla_\tau$ for preorders of size at most 5.
\begin{conjecture} \label{conj:matrixEZ}
The matrix $\nabla_\tau$ is the transpose of
$\nabla_{\tau^\ast}$ for every preorder $\tau$.
\end{conjecture}

A bijective proof of Ehrhart-Zeta duality 
was given for chain preorders in~\cite{Ath26}.
This proof can be easily extended in the setting 
of the matrix Ehrhart-Zeta duality. We sketch the 
proof of the following proposition, which 
generalizes the first proof of 
Theorem~\ref{thm:Ath26} given in 
\cite[Section~4]{Ath26}.
\begin{proposition} \label{prop:matrixEZ}
Conjecture~\ref{conj:matrixEZ} holds for all
chain preorders.
\end{proposition}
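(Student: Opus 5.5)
For a chain preorder, I plan to reduce both sides of the identity $\nabla_\tau(k,\ell)=\nabla_{\tau^\ast}(\ell,k)$ to an explicit sum over block sizes, and then match them by a word bijection extending the first proof of Theorem~\ref{thm:Ath26} in \cite[Section~4]{Ath26}. Let $\tau$ have vertices $v_1 \prec v_2 \prec \cdots \prec v_p$, with $|v_i| = n_i$. The order ideals of $\tau$ are the initial unions $\iI_i = v_1 \cup \cdots \cup v_i$. The dual $\tau^\ast$ has the same vertices in reverse order, so its order ideals are the final unions $v_i \cup \cdots \cup v_p$.

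\textbf{Step 1: encode multichains as matrices.} Take a $k$-multichain $\mathbf{x}^1 \preceq \cdots \preceq \mathbf{x}^k$ of lattice points of $\ell\qQ_\tau$. Record it by its increments $D_{e,j} = x^j_e - x^{j-1}_e \ge 0$, where $\mathbf{x}^0 = \mathbf{0}$; this gives a matrix $D \in \NN^{E \times [k]}$. Since $\qQ_\tau$ is anti-blocking (Remark~\ref{rem:not-simple}(b)), only the top element $\mathbf{x}^k$ needs to satisfy the inequalities. Thus $\nabla_\tau(k,\ell)$ counts the matrices $D$ such that, for every $i$, the entries in the rows indexed by $\iI_i$ sum to at most $\ell|\iI_i|$. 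These constraints depend only on the block totals $N_i = \sum_{e \in v_i}\sum_j D_{e,j}$. Grouping by these totals gives
\[ \nabla_\tau(k,\ell) = \sum_{\mathbf{N}} \prod_{i=1}^p \binom{N_i + k n_i - 1}{N_i}, \]
where $\mathbf{N} = (N_1,\dots,N_p)$ ranges over $\NN^p$ with $N_1 + \cdots + N_i \le \ell(n_1+\cdots+n_i)$ for all $i$. The same computation for $\tau^\ast$, with the roles of $k$ and $\ell$ exchanged, gives an analogous sum. In that sum the block sizes are read in the order $n_p,\dots,n_1$ and the partial-sum constraint has $k$ in place of $\ell$.

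\textbf{Step 2: encode each summand as a binary word.} Each factor $\binom{N_i+kn_i-1}{N_i}$ counts stars-and-bars words with $N_i$ stars and $kn_i-1$ bars. Concatenate these words for $i=1,\dots,p$, inserting a separator between consecutive blocks, and treat the $p-1$ separators as additional bars. The result is a word $w$ in stars and bars such that:
\begin{itemize}
\item $w$ contains exactly $kn-1$ bars;
\item the separators occupy the bar positions with indices $k(n_1+\cdots+n_i)$, so they are determined by $w$ alone;
\item the constraint says that before the $k(n_1+\cdots+n_i)$th bar there are at most $\ell(n_1+\cdots+n_i)$ stars.
\end{itemize}
To make the two sides symmetric, I will pad $w$ with stars up to exactly $\ell n$ stars in total; the slack $\ell n - \sum_i N_i$ plays the role of the extra coordinate $x_0$ from Proposition~\ref{prop:y-gen}. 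I also expect to append a final bar, so that the word has $kn$ bars and $\ell n$ stars. This should turn $\nabla_\tau(k,\ell)$ into the number of words with $\ell n$ stars and $kn$ bars satisfying a ballot-type condition at the checkpoints $(\ell c_i, k c_i)$, where $c_i = n_1 + \cdots + n_i$.

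\textbf{Step 3: the bijection.} Reverse the word and interchange stars with bars. This swaps the roles of $k$ and $\ell$ and reverses the order of the blocks, which is exactly what passing to $\tau^\ast$ does. A condition of the form ``at most $\ell c_i$ stars before the $kc_i$th bar'' should become, after reversal and complementation, ``at most $k(n-c_i)$ stars before the $\ell(n-c_i)$th bar.'' Using the fixed totals $kn$ and $\ell n$, this is the dual constraint at the complementary checkpoint, that is, for the order ideal $v_{i+1}\cup\cdots\cup v_p$ of $\tau^\ast$.

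\textbf{Main obstacle.} I expect the hard part to be the boundary bookkeeping: the ``$-1$'' in each stars-and-bars count, the placement of the separators and of the padding, and the question of whether each inequality is strict or weak at a checkpoint after reversal. These must be arranged so that the map is a genuine involution and not just off by one. I would fix the conventions first on the case $p=1$, where both sides reduce to counting monomials of degree at most $\ell n$ in $kn$ variables. I would then check the smallest nontrivial chain, with two singleton vertices, before proving the general case.
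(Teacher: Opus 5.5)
Your proposal is correct and is essentially the paper's proof. Steps~1--2 re-derive the encoding of \cite[Lemma~4.1]{Ath26}, under which $k$-multichains in $\ell\qQ_\tau$ correspond to vectors $(a_1,\dots,a_{nk})\in\NN^{nk}$ with $a_1+\cdots+a_{kc_i}\le \ell c_i$; your star/bar word is just this vector, so the detour through $\prod_i\binom{N_i+kn_i-1}{N_i}$ is unnecessary. Your reverse-and-swap map is the paper's conjugation $b_j=\#\{i : a_1+\cdots+a_i=j\}$, read in reverse order to match $\tau^\ast$, and the padding on the dual side is exactly the eliminated variable $b_0$.

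The bookkeeping you worry about resolves cleanly if the final bar is appended \emph{before} the padding stars; padding first and then appending the bar is not injective. With that convention, the condition at $c_i$ is equivalent to ``the first $(k+\ell)c_i$ letters contain at most $\ell c_i$ stars''. Reversal with swapping then turns this directly into the weak condition ``the first $(k+\ell)(n-c_i)$ letters contain at most $k(n-c_i)$ stars'' for $\tau^\ast$, with no off-by-one issue.
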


\begin{proof}
Let $\tau$ be a chain $v_1 \prec v_2 \prec 
\cdots \prec v_p$ with vertices of sizes $r_1, 
r_2,\dots,r_p$, respectively. We set $s_i = r_1 
+ r_2 + \cdots + r_i$ for $0 \le i \le p$, so
that where $s_0 := 0$ and $s_p = n$. We may 
assume that $E = \{1, 2,\dots,n\}$ with elements
indexed so that $\qQ_\tau$ is described by the 
inequalities $x_i \ge 0$ for $1 \le i \le n$ and 
\[ x_1 + x_2 + \cdots + x_{s_i} \le s_i \]
for $1 \le i \le p$. 
By \cite[Lemma~4.1]{Ath26}, the $k$-multichains 
in the poset of lattice points of $\ell \qQ_\tau$ 
are in bijection with the points 
$(a_1, a_2,\dots,a_{nk}) \in \NN^{nk}$ such that 
\begin{equation} \label{eq:ineq-ai}
a_1 + a_2 + \cdots + a_{s_i k} \le s_i \ell
\end{equation}
for $1 \le i \le p$. Given such a point, for 
$0 \le j \le n\ell$ we define $b_j$ as the 
number of $i \in \{1, 2,\dots,nk\}$ such 
that $a_1 + a_2 + \cdots + a_i = j$. Then, 
\begin{equation} \label{eq:sum-bj}
b_0 + b_1 + \cdots + b_{n \ell} = nk
\end{equation}
and 
\begin{equation} \label{eq:ineq-bj-1}
b_0 + b_1 + \cdots + b_{s_i \ell} \ge s_i k 
\end{equation}
for $0 \le i \le p$. Using 
Equation~(\ref{eq:sum-bj}) to eliminate $b_0$ 
we find that the solutions $(a_1, a_2,\dots,a_{nk}) 
\in \NN^{nk}$ of the system of linear 
inequalities (\ref{eq:ineq-ai}) are in bijection 
with the solutions $(b_1, b_2,\dots,b_{n \ell}) 
\in \NN^{n\ell}$ of the system of linear 
inequalities 
\begin{equation} \label{eq:ineq-bj-2}
b_{s_i \ell + 1} + b_{s_i \ell + 2} + \cdots + 
b_{n \ell} \le (n - s_i) k
\end{equation}
for $0 \le i \le p-1$. By \cite[Lemma~4.1]{Ath26},
and since $\qQ_{\tau^\ast}$ is described by the 
inequalities $x_i \ge 0$ for $1 \le i \le n$ and 
\[ x_{s_i + 1} + x_{s_i + 2} + \cdots + x_n 
       \le n - s_i \]
for $0 \le i \le p-1$, these solutions are in 
bijection with the $\ell$-multichains in the 
poset of lattice points of $k \qQ_{\tau^\ast}$
and the proof follows.
\end{proof}

\subsection{On the $h^\ast$-polynomial}
\label{sec:h-ast}

We recall from Section~\ref{sec:Ehr-hstar} that 
the $h^\ast$-polynomial $h^\ast(\qQ_\tau, t)$
has nonnegative coefficients which sum to the 
normalized volume of $\qQ_\tau$. Thus, the 
following conjecture is a natural refinement of
the second statement of Theorem~\ref{thm:main-b}
(a). We fix an arbitrary total order $\le$ on the ground 
set $E$ and, given a word $w = (w_1, 
w_2,\dots,w_n) \in E^n$, we denote by $\des(w)$ 
the number of \emph{descents} of $w$, meaning 
indices $i \in \{1, 2,\dots,n-1\}$ such that 
$w_i > w_{i+1}$. The following conjecture extends
\cite[Conjecture~5.2]{AXY26} from arbor polytopes
to all preorder polytopes.
\begin{conjecture} \label{conj:h-ast}
Let $\tau$ be a preorder on a totally ordered 
$n$-element set $E$. Then
\[ h^\ast(\qQ_\tau, t) = \sum_{w \in \wW_\tau}
       t^{n-1-\des(w)}, \]
where $\wW_\tau$ is the set of words $w \in E^n$ 
such that for every order ideal $\iI$ of $\tau$, 
the elements of $\iI$ appear a total of at least 
$|\iI|$ times in $w$. 
\end{conjecture}

This conjecture has been verified by computer
for all preorders of size at most 7. The motivation 
behind it comes from \cite[Theorem~1.5]{AXY26}, 
which verifies it in the special case in which 
$\tau$ is the preorder associated to an arbor 
which consists of a root of any size and several 
single element leaves.

A combinatorial interpretation of 
$h^\ast(\qQ_\tau, t)$ which seems different from
that of Conjecture~\ref{conj:h-ast} was given in
\cite[Section~4]{Ath26} in the special case that
the preorder $\tau$ is a chain. The proof is 
based on the theory of pure shellability of posets 
and generalizes to all preorders with a minimum 
vertex. To state the result, it is convenient to 
assume that $E = \{1, 2,\dots,n\}$ is totally 
ordered as usual. Given a word $w = 
(w_1, w_2,\dots,w_n) \in \{1, 2,\dots,n\}^n$, we 
denote by $\asc^\ast(w)$ the number of indices 
(called \emph{modified ascents}) $i \in \{0, 
1,\dots,n-1\}$ such that $w_i < w_{i+1}$, where 
$w_0 := 1$. The following statement generalizes 
\cite[Proposition~4.2]{Ath26}. To avoid making 
too long of a digression we assume familiarity 
with the relevant parts of 
\cite[Section~4]{Ath26} and only sketch the 
proof, which directly extends that of 
\cite[Proposition~4.2]{Ath26}.
\begin{proposition} \label{prop:h-ast-min} 
Let $\tau$ be a preorder of size $n$ on the 
ground set $E = \{1, 2,\dots,n\}$ which has a 
minimum vertex. Assuming that 1 belongs to this
vertex we have
\[ h^\ast(\qQ_\tau, t) = \sum_{w \in \wW_\tau} 
   t^{\asc^\ast(w)}, \]
where $\wW_\tau$ is as in 
Conjecture~\ref{conj:h-ast}.
\end{proposition}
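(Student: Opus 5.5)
We plan to follow the route of \cite[Proposition~4.2]{Ath26}: Ehrhart--Zeta duality identifies the Ehrhart series with a multichain generating function, and a lexicographic shelling of a finite poset then reads off the $h^\ast$-polynomial. By Theorem~\ref{thm:main-b}(b), $\Ehr(\qQ_\tau,m)=\zZ(P_{\tau^\ast},m+1)$ counts $m$-multichains in $P_{\tau^\ast}$. Adjoining a top element $\hat 1$ gives the bounded poset $\hat P=P_{\tau^\ast}\sqcup\{\hat 1\}$, which is graded of rank $n+1$ because $P_{\tau^\ast}$ is graded with rank function $|\mathbf a|$. The standard theory (\cite[Section~3.12--3.13]{StaEC1}) then gives
\[ \sum_{m\ge 0}\zZ(P_{\tau^\ast},m+1)\,t^m=\frac{\sum_{k}\beta_k\,t^{k}}{(1-t)^{n+1}}, \]
where $\beta_k$ is a flag-$h$-type count. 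If $\hat P$ is EL-shellable, $\beta_k$ counts maximal chains of $\hat P$ with a prescribed descent statistic, so $h^\ast(\qQ_\tau,t)$ becomes a descent (or ascent) generating function over maximal chains.

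Maximal chains of $\hat P$ are maximal chains of $P_{\tau^\ast}$ followed by the step to $\hat 1$. As in the proof of Theorem~\ref{thm:main-b}(a), these are encoded bijectively by words $w\in\wW_\tau$, where $w_i$ is the coordinate increased at step $i$. We will label each cover $\mathbf a\lessdot\mathbf a+\mathbf e_e$ by $e$, and each cover $\mathbf a\lessdot\hat 1$ by a label smaller than every element of $E$ (call it $0$), or equivalently read the labels in the reverse order. The top label then plays the role of the phantom letter $w_0:=1$ in $\asc^\ast$. With these conventions, the ascents and descents of the label sequence should match $\asc^\ast(w)$, up to the reversal built into the convention; we will fix the orientation so that the exponent is exactly $\asc^\ast(w)$.

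The key step is proving that this labeling is an EL-labeling: in every interval $[\mathbf a,\mathbf b]$ and every interval $[\mathbf a,\hat 1]$ there is a unique increasing (or, in the chosen orientation, weakly decreasing) maximal chain, and it is lexicographically first. Intervals $[\mathbf a,\mathbf b]$ inside $P_{\tau^\ast}$ are products of chains, since $P_{\tau^\ast}$ is an order ideal of $\NN^E$, so the coordinate labeling is EL there automatically. The interesting intervals are $[\mathbf a,\hat 1]$. We must show that from any $\mathbf a$ the greedy chain, which always increases the smallest possible coordinate while staying in $P_{\tau^\ast}$, reaches a maximal element, and that it is the unique chain with the required monotonicity.

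This is where the minimum vertex is used. Since $1$ lies in the minimum vertex of $\tau$, it lies in the maximum vertex of $\tau^\ast$, so every nonempty order ideal of $\tau^\ast$ contains $1$. Hence whenever $\mathbf a$ is not of rank $n$, the defining inequality $\sum_{\jJ}x\le|\jJ|$ for each order ideal $\jJ$ of $\tau^\ast$ is compatible with increasing coordinate $1$ until the inequality for $E$ becomes tight. This should make the maximal elements of $P_{\tau^\ast}$ exactly those of rank $n$ and force the lex-first chain to be the monotone one. We expect this verification, namely uniqueness of the monotone chain in $[\mathbf a,\hat 1]$ for an arbitrary preorder with minimum vertex, to be the main obstacle. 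We would first adapt the argument of \cite[Section~4]{Ath26}, replacing prefix sums for chains with the ideal inequalities of $\tau^\ast$, all of which involve coordinate $1$. If that fails, we would instead exhibit the (pure) shelling by the lexicographic order on words directly and compute the restriction sets, which should again give $\asc^\ast(w)$.
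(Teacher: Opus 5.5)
Your strategy matches the paper's. Ehrhart--Zeta duality gives $h^\ast(\qQ_\tau,t)=h(\Delta(P_{\tau^\ast}),t)$. You then label the covers of $\widehat{P}_{\tau^\ast}=P_{\tau^\ast}\cup\{\hat 1\}$ by the coordinate that increases, with the all-ones chain as the distinguished chain of each $[\mathbf a,\hat 1]$. However, both places where the minimum vertex enters are wrong as written, and they are exactly the step you defer as the main obstacle.

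\textbf{The label on the top covers.} With a label $0$ smaller than every letter on the covers $\mathbf a\lessdot\hat 1$, the labeling is not EL.
\begin{itemize}
\item If $\mathbf a$ is not maximal, every maximal chain of $[\mathbf a,\hat 1]$ ends with a strict descent, so there is no rising chain.
\item Reversing the label order (or reading chains top-down) does not help: every maximal element $\mathbf b\ge\mathbf a$ of $P_{\tau^\ast}$ then yields its own weakly decreasing chain, so uniqueness fails.
\end{itemize}
Take $\tau$ the chain $1<2$. The maximal chains of $\widehat{P}_{\tau^\ast}$ carry labels $(1,1,0)$, $(1,2,0)$, $(2,1,0)$. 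None is weakly increasing, two are weakly decreasing, and counting descents gives $2t+t^2$ instead of $h^\ast(\qQ_\tau,t)=1+2t$.

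The missing idea is to label these covers by $1$ itself: the letter of the minimum vertex and the smallest label, so that ties are not descents.
\begin{itemize}
\item A rising (weakly increasing) chain in $[\mathbf a,\hat 1]$ then has all labels at most $1$. Hence it is the all-ones chain, which is automatically lexicographically first.
\item The chain with word $w$ has $\des(w)$ descents, plus one more if $w_n>1$. This is $\asc^\ast$ of the reversed word.
\item Membership in $\wW_\tau$ depends only on letter multiplicities, so $\wW_\tau$ is closed under reversal and the stated formula follows.
\end{itemize}

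\textbf{Existence of the all-ones chain.} Your justification is dualized the wrong way.
\begin{itemize}
\item It is false that every nonempty order ideal of $\tau^\ast$ contains $1$: for $\tau$ the chain $1<2$, the set $\{2\}$ is an ideal of $\tau^\ast$.
\item If that claim were true, adding $\mathbf e_1$ would tighten every constraint, and nothing would guarantee that the greedy chain reaches rank $n$.
\item The correct fact is the opposite one. The order ideals of $\tau^\ast$ are the order filters of $\tau$, and since $1$ lies in the minimum vertex of $\tau$, the only such filter containing $1$ is $E$.
\item Hence $x_1$ occurs only in $\sum_{e\in E}x_e\le n$. From any $\mathbf a\in P_{\tau^\ast}$ you can add $\mathbf e_1$ exactly $n-|\mathbf a|$ times and land on a maximal element.
\end{itemize}
This one-line observation is the entire content of the verification you expected to be hard. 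With these two corrections your outline becomes the paper's proof.
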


\begin{proof}
As explained in the proof of 
\cite[Proposition~4.2]{Ath26} we have 
\[ h^\ast(\qQ_\tau, t) = h(\Delta(P_{\tau^\ast}), 
   t) \]
by Ehrhart-Zeta duality, where $\Delta
(P_{\tau^\ast})$ stands for the order complex of 
$P_{\tau^\ast}$. 

We consider the poset 
$\widehat{P}_{\tau^\ast} = P_{\tau^\ast} \cup 
\{ \hat{1} \}$, obtained from $P_{\tau^\ast}$
by adding a maximum element $\hat{1}$, and an 
integer valued edge labeling $\lambda$ of this 
poset, defined as follows. For a cover 
relation $(u,v)$ of $P_{\tau^\ast}$ we set 
$\lambda(u,v) = i$, if $u$ and $v$ differ in 
the $i$th coordinate. We also set 
$\lambda(u, \hat{1}) = 1$ for every maximal
element $u \in P_\tau$ and claim that $\lambda$ 
is an EL-labeling. This means that, given any 
nonsingleton closed interval $[u, v]$ of 
$\widehat{P}_{\tau^\ast}$, there is a unique 
rising (with respect to $\lambda$) maximal 
chain in $[u, v]$ and that this chain is 
lexicographically least among all maximal 
chains of $[u, v]$. This is clear if $v \ne 
\hat{1}$, since then the labels of the maximal 
chains of $[u, v]$ are the permutations of a 
multiset. Suppose that $v = \hat{1}$. By 
assumption, $\tau^\ast$ has a maximum vertex 
and this vertex contains 1. This implies that
from the linear inequalities 
(\ref{eq:Q-tau-ineq-b}) which define 
$\qQ_{\tau^\ast}$, only the one associated 
to the order ideal $E$ of $\tau^\ast$ involves 
the variable $x_1$. As a result, there is a 
unique maximal chain in $[u, \hat{1}]$ with 
label $(1, 1,\dots,1)$ and this chain is the 
only rising maximal chain of $[u, \hat{1}]$. 

Since $\lambda$ is EL, 
$h(\Delta(P_{\tau^\ast}), t)$ can be 
interpreted as the descent enumerator (with 
respect to $\lambda$) of the maximal chains 
of $\widehat{P}_{\tau^\ast}$. As already 
observed in the proof of 
Theorem~\ref{thm:main-b}, these chains are in 
bijection with the elements of $\wW_\tau$ and 
the proof follows.
\end{proof}

The proof of Proposition~\ref{prop:h-ast-min}
shows that the poset $P_\tau$ is shellable, and 
hence Cohen-Macaulay (over $\ZZ$ and any field), 
if $\tau$ has a maximum vertex. This raises the
following question.

\begin{question} \label{que:shellability}
Is the poset $P_\tau$ shellable for every 
preorder $\tau$? Is it Cohen--Macaulay for 
every preorder $\tau$?
\end{question}

A formula for $h^\ast(\qQ_\tau, t)$ may be 
derived from part (a) of Theorem~\ref{thm:main-b}
as follows.
\begin{proposition} \label{prop:h-ast}
Let $\tau$ be a preorder of size $n$ on a 
totally ordered ground set $E$. Then
\[ h^\ast(\qQ_\tau, t) = (1-t)^n + 
   \sum_{w \in \widehat{\wW}_\tau \smallsetminus 
	 \{\varnothing\}} (1-t)^{n-\ell(w)} 
	 t^{\des(w)}, \]
where $\widehat{\wW}_\tau$ is the set of words 
$w$ of length (number of letters) $\ell(w) \le n$ 
with letters from $E$ such that for every order 
filter $\jJ$ of $\tau$, the elements of $\jJ$ 
appear a total of at most $|\jJ|$ times in $w$. 
\end{proposition}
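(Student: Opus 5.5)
The plan is to start from the Ehrhart formula in part (a) of Theorem~\ref{thm:main-b}, $\Ehr(\qQ_\tau,m)=\sum_{\mathbf{a}\in P_{\tau^\ast}}\prod_{e\in E}\binom{m+a_e-1}{a_e}$, and to compute the generating function $\sum_{m\ge0}\Ehr(\qQ_\tau,m)t^m$ term by term over $\mathbf{a}$. Then I multiply by $(1-t)^{n+1}$, as in \eqref{eq:def-hstar}.

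First I identify the index set with words. A lattice point $\mathbf{a}\in P_{\tau^\ast}$ satisfies $\sum_{e\in\jJ}a_e\le|\jJ|$ for every order ideal $\jJ$ of $\tau^\ast$, that is, for every order filter $\jJ$ of $\tau$. Hence the words in $\widehat{\wW}_\tau$ are exactly the rearrangements of the multisets with content $\mathbf{a}\in P_{\tau^\ast}$, and such a word has length $\ell(w)=|\mathbf{a}|\le n$. The zero vector corresponds to the empty word. Its product is identically $1$, including at $m=0$, so it contributes $\sum_{m\ge0}t^m=1/(1-t)$, which becomes $(1-t)^n$ after multiplying by $(1-t)^{n+1}$. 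For $\mathbf{a}\ne\mathbf{0}$ the product vanishes at $m=0$, as it should.

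The key step is a standard $P$-partition type refinement of $\prod_e\binom{m+a_e-1}{a_e}$ for $m\ge1$. This product counts the ways to choose, for each $e$, a multiset of $a_e$ values in $\{1,\dots,m\}$, i.e.\ a labelling of the $|\mathbf{a}|$ letters by values. I sort the labelled letters by value, breaking ties by the total order on $E$ in increasing order. This yields a unique word $w$ of content $\mathbf{a}$ together with a weakly increasing sequence $1\le f_1\le\cdots\le f_\ell\le m$ satisfying $f_i<f_{i+1}$ whenever $w_i>w_{i+1}$. Conversely, each such pair $(w,f)$ determines a labelling. For fixed $w$ with $d=\des(w)$, the number of such $f$ is $\binom{m-d+\ell-1}{\ell}$. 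Summing over $m$, I expect the identity
\[ \sum_{m\ge1}\binom{m-d+\ell-1}{\ell}t^m=\frac{t^{d+1}}{(1-t)^{\ell+1}}, \]
which follows by the substitution $f_i\mapsto f_i-(\text{number of descents before }i)$. Multiplying by $(1-t)^{n+1}$ gives the contribution $(1-t)^{n-\ell(w)}t^{d+1}$ for each nonempty $w$.

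The main obstacle I expect is bookkeeping of the exponent of $t$. The computation above naturally gives $t^{1+\des(w)}$, and the sanity check $n=1$ confirms it: there $\qQ_\tau=[0,1]$, and $(1-t)+t=1$. So in writing the proof I will have to match this with the descent convention intended in the statement. The natural fix is to count a descent at position $0$, with $w_0$ declared larger than every letter; equivalently, the exponent is one plus the usual descent number. Apart from this normalization and the check of the tie-breaking rule in the sorting bijection, the argument is routine. Summing the contributions over all $w\in\widehat{\wW}_\tau$ then yields the claimed formula.
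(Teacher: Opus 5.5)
Your proof is correct and takes essentially the paper's route. You expand the generating function of the formula in Theorem~\ref{thm:main-b}~(a) term by term over $\mathbf{a}\in P_{\tau^\ast}$ and apply MacMahon's identity, which you reprove by the standard sorting bijection; the paper simply cites it, and it shifts $m\mapsto m+1$ where you use the vanishing at $m=0$. You then identify $\widehat{\wW}_\tau$ with the disjoint union of the sets $\mathfrak{S}_{\mathbf{a}}$ of rearrangements of the multisets of content $\mathbf{a}$.

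Your exponent correction is also right. The paper's own computation yields $h^\ast(\qQ_\tau,t)=(1-t)^{n+1}+t\sum_{\mathbf{a}\in P_{\tau^\ast}}(1-t)^{n-|\mathbf{a}|}\sum_{w\in\mathfrak{S}_{\mathbf{a}}}t^{\des(w)}$. So the displayed statement has lost a factor of $t$ and should read $t^{1+\des(w)}$, as your $n=1$ check shows (the printed formula would give $2-t$ there).
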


\begin{proof}
Given $\mathbf{a} = (a_e) \in \NN^E$, let us 
denote by $\mathfrak{S}_{\mathbf{a}}$ the set 
of permutations of the multiset which consists
of $a_e$ copies of $e$, for $e \in E$. By part 
(a) of Theorem~\ref{thm:main-b},
\[ \begin{aligned}
   \sum_{m \ge 0} \Ehr(\qQ_\tau,m) t^m &= 1 + 
	 \sum_{m \ge 0} \Ehr(\qQ_\tau,m+1) t^{m+1} \\
&= 1 + t \sum_{m \ge 0} \left( \, 
   \sum_{(a_e) \in P_{\tau^\ast}} \prod_{e \in E} 
	 \binom{m+a_e}{a_e} \right) t^m \\ 
&= 1 + t \sum_{\mathbf{a} \in P_{\tau^\ast}}
   \left( \, \sum_{w \in \mathfrak{S}_{\mathbf{a}}} 
	 t^{\des(w)} \right) / (1-t)^{1 + |\mathbf{a}|}, 
\end{aligned} \]
where the third equality applies MacMahon's 
formula \cite[p.~211]{Mac04}
\[ \sum_{m \ge 0} \left( \, \prod_{e \in E} 
   \binom{m+a_e}{a_e} \right) t^m
   = \frac{\displaystyle\sum_{w \in 
	   \mathfrak{S}_{\mathbf{a}}} 
	 t^{\des(w)}}{(1-t)^{1 + |\mathbf{a}|}} \]
for $\mathbf{a} = (a_e) \in \NN^E$. This 
computation shows that 
\[ h^\ast(\qQ_\tau, t) = (1-t)^{n+1} + t
   \sum_{\mathbf{a} \in P_{\tau^\ast}} 
	 (1-t)^{n-|\mathbf{a}|} 
	 \sum_{w \in \mathfrak{S}_{\mathbf{a}}} 
	 t^{\des(w)}. \]
We now observe that the summand of the outer 
sum on the right-hand side for $\mathbf{a} = 
\mathbf{0}$ is equal to $(1-t)^n$ and that
$\widehat{\wW}_\tau$ is the disjoint union of 
the sets $\mathfrak{S}_{\mathbf{a}}$ for 
$\mathbf{a} \in P_{\tau^\ast}$ and the proof 
follows.
\end{proof}

As mentioned in the introduction, Ehrhart
polynomials of preorder polytopes are not always
real-rooted. For example, if $\tau$ is a partial 
order consisting of a minimum element and two 
other elements incomparable to each other which 
cover it, then $\Ehr(\qQ_\tau, t) = 1 + 23t/6 + 
5t^2 + 13t^3/6$ has two nonreal complex roots.
We conjecture that $\Ehr(\qQ_\tau, t)$ has the
weaker magic positivity property and hence that
$h^\ast(\qQ_\tau,t)$ is real-rooted.
\begin{conjecture} \label{conj:magic}
The Ehrhart polynomial $\Ehr(\qQ_\tau, t)$ is 
magic positive for every preorder $\tau$. In 
particular, $h^\ast(\qQ_\tau,t)$ is real-rooted. 
\end{conjecture}

This conjecture has been proven for the special 
class of the arbor polytopes studied in~\cite{AXY26} 
and also follows from \cite[Theorem~13]{SP02} 
when $\tau$ is a chain with vertices of size one 
(in which case $\qQ_\tau$ is the standard 
Pitman--Stanley polytope). The coefficients of 
the Ehrhart polynomial in the magic basis 
$t^i(1+t)^{n-i}$, multiplied by $n!$, have 
interesting combinatorial interpretations in both 
cases; see \cite[Section~1]{AFM26}
\cite[Section~3]{AXY26} \cite[Theorem~10.1]{GS06}. 
Conjecture~\ref{conj:magic} has been verified by 
computer for all preorders of size at most 7.

\subsection{On the zeta polynomial}
\label{sec:zeta-conj}

Computational evidence suggests that $(-1)^n 
\zZ(P_\tau,-1)$ is equal to the number of maximal
elements of $P_\tau$. A refined conjecture may be 
formulated using the notion of $q$-zeta polynomial
introduced in~\cite{Cha25a}. Given a finite graded
poset $P$ with rank function $\rho: P \to \NN$, it 
was shown in~\cite{Cha25a} that there exists a 
polynomial $\zZ_q(P,t) \in \QQ(q)[t]$ such that 
\[ \zZ_q(P,[m]_q) = \sum_{p_1 \preceq p_2 \preceq 
   \cdots \preceq p_{m-1}} q^{\rho(p_1) + \rho(p_2) 
	 + \cdots + \rho(p_{m-1})} \]
for every integer $m \ge 2$, where the sum ranges 
over all multichains $p_1 \preceq p_2 \preceq 
\cdots \preceq p_{m-1}$ in $P$ and $[m]_q = 
(q^m-1)/(q-1)$ is the standard $q$-analog of $m$. 
Let us denote by $\Max(P)$ the set of maximal 
elements of a finite poset $P$.
\begin{conjecture} \label{conj:zeta}
For every preorder $\tau$ of size $n$ 
\begin{equation} \label{eq:conj-qzeta}
\zZ_q(P_\tau, [-1]_q) = 
(-1)^n \sum_{\mathbf{a} \in \Max(P_\tau)} 
q^{-\cov(\mathbf{a})},
\end{equation}
where $\cov(\mathbf{a})$ is the number of elements 
of $P_\tau$ covered by $\mathbf{a}$ and $[-1]_q = 
-1/q$. In particular,
\begin{equation} \label{eq:conj-zeta}
\zZ(P_\tau,-1) = (-1)^n \, \# \Max(P_\tau) 
\end{equation}
for every preorder $\tau$ of size $n$.  
\end{conjecture}

This conjecture has been verified by
computer for all preorders of size at most 6. 
For the example of Figure~\ref{fig1} we have
$\zZ(P_\tau,-1) = -18$ and
\[ \zZ_q(P_\tau, [-1]_q) = - (2q + 1)
   (q^2 + 4q + 1) / q^5 \]
and $P_\tau$ has 18 maximal elements.

\begin{remark} \label{rem:conj-zeta} \rm
(a) Equation~(\ref{eq:conj-zeta}) is a consequence
of Conjecture~\ref{conj:h-ast}. Indeed, we have 
$\zZ(P_\tau,-1) = \Ehr(\qQ_{\tau^\ast},-2)$ by 
Ehrhart-Zeta duality and 
\[ \Ehr(\qQ_{\tau^\ast}, t) = \sum_{i=0}^n 
   h^\ast_i(\qQ_{\tau^\ast}) \binom{t+n-i}{n} \]
by Equation~(\ref{eq:def-hstar}). Moreover, 
$h^\ast_n(\qQ_{\tau^\ast}) = 0$ since 
$\qQ_{\tau^\ast}$ has no interior lattice points
and hence 
\[ \Ehr(\qQ_{\tau^\ast},-2) = (-1)^n \,
   h^\ast_{n-1}(\qQ_{\tau^\ast}). \]
According to Conjecture~\ref{conj:h-ast}, 
$h^\ast_{n-1}(\qQ_{\tau^\ast})$ is equal to the 
number of words $w \in \wW_{\tau^\ast}$ with no
descent. These are the (weakly) increasing words
$w \in E^n$ such that the elements of any order 
filter $\jJ$ of $\tau$ appear a total of at least 
$|\jJ|$ times in $w$. Equivalently, the elements 
of any order ideal $\iI$ of $P_\tau$ should appear 
at most $|\iI|$ times in $w$. These words are 
obviously in bijection with the maximal elements 
of $P_\tau$.  

(b) By Ehrhart reciprocity, $(-1)^n \,
\Ehr(\qQ_{\tau^\ast},-2)$ is equal to the number 
of lattice points in the interior of 
$2\qQ_{\tau^\ast}$. Thus, in view of 
Proposition~\ref{prop:upper}, 
Equation~(\ref{eq:conj-zeta}) is equivalent to 
the statement that the number of maximal elements
of $P_\tau$ is equal to the number of lattice 
points of $\qQ_{\tau^\ast}$ not in the upper 
boundary $\hat{\partial} (\qQ_{\tau^\ast})$.
\qed
\end{remark}

\begin{remark} \label{rem:conj-zeta-chains} \rm
Equation~(\ref{eq:conj-zeta}) holds for every
chain preorder $\tau$. To explain why, let us 
adopt the notation of the proof of 
Proposition~\ref{prop:matrixEZ}. By part (b) 
of Remark~\ref{rem:conj-zeta}, it suffices to
show that the number of maximal elements
of $P_{\tau^\ast}$ is equal to the number of 
lattice points of $\qQ_\tau \smallsetminus 
\hat{\partial} (\qQ_\tau)$. The maximal elements
of $P_{\tau^\ast}$ are the tuples $(a_1, 
a_2,\dots,a_n) \in \NN^n$ such that 
$a_1 + a_2 + \cdots + a_n = n$
and 
\[ a_{s_i + 1} + a_{s_i + 2} + \cdots + 
a_n \le n - s_i \]
for $1 \le i \le p-1$ or, equivalently, 
\[ a_1 + a_2 + \cdots + a_{s_i} \ge s_i \]
for $1 \le i \le p-1$. The lattice points 
of $\qQ_\tau \smallsetminus \hat{\partial} 
(\qQ_\tau)$ are the tuples $(b_1, b_2,\dots,b_n) 
\in \NN^n$ such that 
\[ b_1 + b_2 + \cdots + b_{s_i} < s_i \]
for $1 \le i \le p$. Given a maximal element
$(a_1, a_2,\dots,a_n)$ of $P_{\tau^\ast}$, for 
$1 \le j \le n$ we define $b_j$ to be the 
number of indices $i \in \{1, 2,\dots,n\}$ 
such that $a_1 + a_2 + \cdots + a_i = j-1$. 
We leave it to the reader to verify that 
$(b_1, b_2,\dots,b_n)$ is a lattice point of 
$\qQ_\tau \smallsetminus \hat{\partial} 
(\qQ_\tau)$ and that the resulting map is a 
bijection from the set of maximal elements of 
$P_{\tau^\ast}$ to the set of lattice points 
of $\qQ_\tau \smallsetminus \hat{\partial} 
(\qQ_\tau)$. 
\end{remark}


\section{Lattice point enumeration}
\label{sec:h-vectors}

This section extends to preorder polytopes some 
conjectures of~\cite{Cha25b} on the lattice point 
enumeration of arbor polytopes. Throughout this 
and the following two sections, $\tau$ continues 
to be a preorder of size $n$ on the ground set $E$.

For $0 \le i \le n$, we let $h_i(\tau)$ be  
the number of lattice points of $\qQ_\tau$ which 
have exactly $i$ nonzero coordinates. The vector 
$h(\tau) = (h_0(\tau), h_1(\tau),\dots,h_n(\tau))$ 
is called the \emph{$h$-vector} of $\tau$ and the 
polynomial $h(\tau, t) = \sum_{i=0}^n h_i(\tau)t^i$ 
is called the \emph{$h$-polynomial}. These concepts
directly generalize the notion of $h$-vector of an
arbor, introduced in \cite{Cha25b}. For the example 
of Figure~\ref{fig1} we have $h(\tau) = (1, 11, 34, 
34, 11, 1)$. We note that $h_0(\tau) = h_1(\tau) = 
1$. The following conjecture, the parts of 
which appear in the order of increasing strength, 
generalizes Conjecture~1.1 in~\cite{Cha25b}. For 
undefined terminology about simplicial polytopes, 
see \cite{StaCCA}.
\begin{conjecture} \label{conj:hvector}
For every preorder $\tau$ of size $n$:
\begin{itemize}
\itemsep=0pt
\item[{\rm (a)}]
$h(\tau)$ is palindromic, meaning that $h_i(\tau) 
= h_{n-i}(\tau)$ for $0 \le i \le n$.

\item[{\rm (b)}]
$h(\tau)$ is palindromic and unimodal, meaning 
that $h_i(\tau) = h_{n-i}(\tau)$ for $0 \le i \le 
n$ and $h_0(\tau) \le h_1(\tau) \le \cdots \le 
h_{\lfloor n/2 \rfloor}(\tau)$.

\item[{\rm (c)}]
$h(\tau)$ is equal to the $h$-vector of an 
$n$-dimensional simplicial polytope.

\item[{\rm (d)}]
$h(\tau)$ is equal to the $h$-vector of an 
$n$-dimensional flag simplicial polytope.
\end{itemize}

\end{conjecture}

For example, if $\tau$ is the chain preorder
of size $n$ having $n$ vertices of cardinality 
1 or one vertex of cardinality $n$, then the 
role of the flag simplicial polytope of part (d)
can be played by the simplicial associahedron of 
of type $\TA_n$ and $\TB_n$, respectively; see 
Sections~7 and~8 of \cite{Cha25b}. The following 
statement was implicitly conjectured for arbor 
polytopes in \cite[Section~1.1]{Cha25b}.
\begin{conjecture} \label{conj:gamma}
The polynomial $h(\tau,t)$ is $\gamma$-positive,
meaning that 
\[ h(\tau,t) = \sum_{i=0}^{\lfloor n/2 \rfloor}
\gamma_i(\tau) t^i (1+t)^{n-2i} \]
for some nonnegative integers $\gamma_0(\tau), 
\gamma_1(\tau),\dots,\gamma_{\lfloor n/2 \rfloor}
(\tau)$, for every preorder $\tau$ of size $n$.
In particular, the vector $h(\tau)$ is palindromic 
and unimodal.
\end{conjecture}

A more optimistic conjecture is the following.
\begin{conjecture} \label{conj:roots}
The polynomial $h(\tau,t)$ has only real roots 
for every preorder $\tau$.
\end{conjecture}

The following statement substantially 
generalizes Conjecture~4.2 in~\cite{Cha25b}.
\begin{conjecture} \label{conj:hvector-dual}
We have $h(\tau) = h(\tau^\ast)$ for every 
preorder $\tau$.
\end{conjecture}

All previous conjectures, other than 
Conjecture~\ref{conj:roots}, were proven for 
chain preorders in \cite{Ath26}. They have been 
verified (together with 
Conjecture~\ref{conj:roots}) in several other 
special cases in \cite{AXY26} 
\cite[Sections~7-10]{Cha25b}; see also 
Section~\ref{sec:exa}. The authors have verified 
by computer Conjecture~\ref{conj:hvector} (c) for 
all preorders of size at most 7 and arbors of
size at most 8, Conjectures~\ref{conj:gamma} 
and~\ref{conj:hvector-dual} for all preorders 
of size at most 7 and 
Conjecture~\ref{conj:roots} for all preorders 
of size at most 7 and arbors of size at most 10.




\begin{remark} \label{rem:christos-example} \rm
We do not know of a class of generalized 
permutohedra larger than that of preorder 
polytopes for which the $h$-vector may be 
palindromic. For example, $\qQ = 
\bDelta_{\{1,2\}} + \bDelta_{\{2,3\}} + 
\bDelta_{\{3\}}$ is a $Y$-type generalized 
permutohedron  in $\RR^3$ described by the 
inequalities $x_1, x_2, x_3 \ge 0$ and 
\begin{align*} 
x_1 & \le 1 \\  
x_1 + x_2 & \le 2 \\
x_3 & \le 2 \\
x_1 + x_2 + x_3 & \le 3.
\end{align*}
Counting its lattice points by the number of 
nonzero coordinates gives the vector $(1,5,6,1)$,
which is not palindromic.
\end{remark}

\begin{remark} \label{rem:series} \rm
One may naturally consider the formal power 
series
\[ F(\tau, t) = 
   \frac{h(\tau,t)}{{(1-t)}^{n+1}}. \]
The coefficients of $F(\tau, t)$ are the values 
of a polynomial at nonnegative integers. Curiously,
this polynomial has all its complex roots on the 
line $\re(z) = -1/2$ for all preorders of size at 
most 7 and all arbors of size at most 8. The arbor 

\medskip
\begin{center}
\begin{tikzpicture}
[grow=down]
\tikzstyle{every node}=[draw,shape=circle]

\node [fill=racines,thick] {$a$}
  child {node {$b,b'$}}
  child {node {$c,c'$}}
  child {node {$d,d'$}}
  child {node {$e,e'$}}
;
\end{tikzpicture}
\end{center}

\medskip
\noindent
is the unique arbor of size 9 for which this
property fails.
%
\end{remark}

\section{Preorder polytopes and reflexive polytopes}
\label{sec:reflexive}

This section shows that, after suitable 
translation, the lattice polytope 
\[ \qQ_\tau(1,1) = \qQ_\tau + \bDelta_E \] 
(see Section~\ref{sec:Minkowski}) is reflexive 
and investigates its properties. We recall that 
an $n$-dimensional lattice polytope $\qQ$ in 
$\RR^n$ having $\mathbf{0}$ in its  
interior is said to be \emph{reflexive} if the 
polar dual of $\qQ$ is also a lattice polytope. 
An equivalent condition \cite[Theorem~35.8]{HiAC} 
is that the $h^\ast$-polynomial of $\qQ$ is 
palindromic (of degree $n$).

Let $\rR_\tau$ denote the translation of
$\qQ_\tau(1,1)$ by the negative of the vector 
$\mathbf{1} = \sum_{e \in E} {\rm \mathbf{e}}_e
\in \RR^E$. By Proposition~\ref{prop:Q(r,s)-ineq}, 
$\qQ_\tau(1,1)$ is defined by the inequalities 
$x_e \ge 0$ for every $e \in E$ and 
\[ \sum_{e \in \iI} x_e \le |\iI| + 1 \] 
for every order ideal $\iI$ of $\tau$. As a result, 
$\rR_\tau$ is defined by the inequalities 
\begin{equation} \label{eq:Rtau-a}
x_e \ge -1 
\end{equation}
for every $e \in E$ and 
\begin{equation} \label{eq:Rtau-b}
\sum_{e \in \iI} x_e \le 1
\end{equation}
for every order ideal $\iI$ of $\tau$. 
\begin{proposition}
The lattice polytope $\rR_\tau$ is reflexive for 
every preorder $\tau$.
\end{proposition}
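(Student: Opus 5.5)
Since $\rR_\tau$ is a translate of the lattice polytope $\qQ_\tau(1,1)$ by the lattice vector $-\mathbf{1}$, it is a lattice polytope; by Proposition~\ref{prop:y-gen} and Proposition~\ref{prop:Q(r,s)-ineq}, $\qQ_\tau(1,1)$ is a Minkowski sum of lattice simplices. It is $n$-dimensional and contains $\mathbf{0}$ in its interior, because $\mathbf{0}$ satisfies \eqref{eq:Rtau-a} and \eqref{eq:Rtau-b} strictly: $0 > -1$ and $0 < 1$. The plan is to use the standard characterization of reflexivity via facet descriptions. A lattice polytope with $\mathbf{0}$ in its interior is reflexive if and only if it can be written as $\{\mathbf{x} : \langle \mathbf{u}_i, \mathbf{x}\rangle \le 1\}$ with every $\mathbf{u}_i$ an integer vector. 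Indeed, the polar is then the convex hull of the $\mathbf{u}_i$, which is a lattice polytope. Redundant inequalities cause no harm here, since the polar is the convex hull of all the $\mathbf{u}_i$ whether or not they are facet normals: any valid inequality of this form places $\mathbf{u}_i$ in the polar.

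The key step is to rewrite the description \eqref{eq:Rtau-a}--\eqref{eq:Rtau-b} in this normalized form. Inequality \eqref{eq:Rtau-a} is equivalent to $\langle -{\rm \mathbf{e}}_e, \mathbf{x}\rangle \le 1$, with integer normal $-{\rm \mathbf{e}}_e$. Inequality \eqref{eq:Rtau-b} reads $\langle \mathbf{1}_\iI, \mathbf{x}\rangle \le 1$, where $\mathbf{1}_\iI$ is the $0/1$ indicator vector of the order ideal $\iI$. Hence
\[ \rR_\tau = \{\mathbf{x} \in \RR^E : \langle \mathbf{u}, \mathbf{x}\rangle \le 1 \ \text{for all} \ \mathbf{u} \in U\}, \]
where $U = \{-{\rm \mathbf{e}}_e : e \in E\} \cup \{\mathbf{1}_\iI : \iI \ \text{an order ideal of} \ \tau\}$ consists of integer vectors. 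It follows that the polar dual of $\rR_\tau$ equals ${\rm conv}(U)$, which is a lattice polytope. Therefore $\rR_\tau$ is reflexive.

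The only points needing care are those in the first paragraph. The first is that $\rR_\tau$ is genuinely a lattice polytope; this comes from the Minkowski sum description, which is exactly why the proposition is not just a formal consequence of the inequalities. The second is that $\mathbf{0}$ is interior, which gives full dimension and makes polarity well behaved. We do not need to identify the facets exactly, although one could note that ${\rm conv}(U)$ contains $\mathbf{0}$ in its interior, as it must.
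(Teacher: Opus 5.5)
Your proposal is correct and follows the paper's argument. The inequalities \eqref{eq:Rtau-a}--\eqref{eq:Rtau-b} all have the form $\langle \mathbf{u}, \mathbf{x}\rangle \le 1$ with integer $\mathbf{u}$ and hold strictly at $\mathbf{0}$, so $\mathbf{0}$ is interior and the polar dual is a lattice polytope. You are only slightly more explicit than the paper in noting that $\rR_\tau$ is a lattice polytope because of the Minkowski sum description (Propositions~\ref{prop:y-gen} and~\ref{prop:Q(r,s)-ineq}), which the paper takes for granted in the statement.
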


\begin{proof}
The inequalities~(\ref{eq:Rtau-a}) 
and~(\ref{eq:Rtau-b}) show that $\mathbf{0}$ is 
the unique interior lattice point of $\rR_\tau$ 
and that its polar dual has vertices with integer
coordinates.  
\end{proof}

Let $\rR^\vee_\tau$ be the reflexive polytope 
which is polar dual to $\rR_\tau$. By definition, 
$\rR^\vee_\tau$ is the convex hull of the vectors 
$-{\rm \mathbf{e}}_e$ for all $e \in E$ and
$\sum_{e \in \iI} {\rm \mathbf{e}}_e$ for all
nonempty order ideals $\iI$ of $\tau$. Let us 
denote this set of vectors by $V_\tau$. Since 
$V_\tau$ is a subset of the set of vertices of 
the convex hull of the unit cube $[0,1]^n$ and 
the vectors $-{\rm \mathbf{e}}_e$ for all 
$e \in E$, every element of $V_\tau$ is a vertex 
of $\rR^\vee_\tau$ and $\rR^\vee_\tau$ has no 
lattice points other than $\mathbf{0}$ and its 
vertices.

We now state two conjectures about the 
$h^*$-vector of $\rR^\vee_\tau$. For the 
example of Figure~\ref{fig1} we have 
$h^\ast(\rR_\tau) = (1,228,1614,1614,218,1)$ 
and $h^\ast(\rR^\vee_\tau) = (1,7,16,16,7,1)$;
in particular, $\rR_\tau$ has 234 lattice 
points and $\rR^\vee_\tau$ has 13 lattice 
points.

\begin{conjecture} \label{conj:Rtau-a}
We have $h^\ast(\rR^\vee_\tau) = h^\ast
(\rR^\vee_{\tau^\ast})$ (equivalently, $\Ehr
(\rR^\vee_\tau, t) = \Ehr(\rR^\vee_{\tau^\ast}, 
t)$) for every preorder $\tau$.
\end{conjecture}

\begin{conjecture} \label{conj:Rtau-b}
We have $h^\ast(\rR^\vee_\tau,t) = h^\ast
(\rR^\vee_{\tau_1},t) \, h^\ast(\rR^\vee_{\tau_2},t)$ 
if the preorder $\tau$ is the ordinal sum of 
$\tau_1$ and $\tau_2$.
\end{conjecture}

We are unable to explain the latter conjecture 
in terms of the structure of $\rR^\vee_\tau$ 
since, in general, this polytope cannot be 
expressed as a free product.

\begin{remark} \rm
Curiously, the roots of the Ehrhart polynomial 
of $\rR^\vee_\tau$ lie on the line 
$\operatorname{Re}(z)=-1/2$ for every preorder
$\tau$ of size at most 6. Assuming the validity
of Conjecture~\ref{conj:Rtau-b}, this property 
fails when $\tau$ is the ordinal sum of two 
5-element antichains.
\end{remark}

%
%

\section{Double Ehrhart theory}
\label{sec:double}

This section is concerned with the lattice point 
enumeration of the polytope $\qQ_\tau(r,s)$, 
introduced in Section~\ref{sec:Minkowski}.

\begin{proposition}
There exists a polynomial $\biE_\tau(u,v)$ of 
total degree $n$ such that $\biE_\tau(r,s)$ is 
equal to the number of lattice points of 
$\qQ_\tau(r,s)$ for all $r,s \in \NN$.
\end{proposition}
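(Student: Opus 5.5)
We will reduce the statement to Postnikov's formula (Theorem~\ref{thm:post}) through the $Y$-type description of $\qQ_\tau(r,s)$ given in Proposition~\ref{prop:y-gen}. By that proposition, $\qQ_\tau(r,s)$ is unimodularly equivalent to
\[ r \sum_{e \in E} \Delta_{\up(e) \sqcup \{0\}} + s\, \Delta_{E \sqcup \{0\}} \]
in $\RR^{E \sqcup \{0\}}$. Here each vertex $v$ contributes $|v|$ copies of $\Delta_{\up(v)\sqcup\{0\}}$, and these are regrouped as one simplex $\Delta_{I_e}$ for each $e \in v$, since $\up(e)=\up(v)$. This is exactly the Minkowski sum $\sum_{e \in E \sqcup \{0\}} y_e \Delta_{I_e}$ attached to the bipartite graph $G_\tau$ of Section~\ref{sec:EZ}, now with parameters $y_0 = s$ and $y_e = r$ for $e \in E$. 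Unimodular equivalence preserves lattice point counts, so the number of lattice points of $\qQ_\tau(r,s)$ equals that of this $Y$-type generalized permutohedron.

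Next we apply Theorem~\ref{thm:post} with $t=1$. Its hypotheses hold: $G_\tau$ has no isolated vertices, $I_0 = E \sqcup \{0\}$, and $r,s \in \NN$. Lemma~\ref{lem:tau-drac} identifies the $G_\tau$-draconian sequences with pairs $(a_0, \mathbf{a})$ with $\mathbf{a} \in P_{\tau^\ast}$ and $a_0 = n - |\mathbf{a}|$. Hence
\[ \#\big(\qQ_\tau(r,s) \cap \ZZ^E\big) = \sum_{\mathbf{a} \in P_{\tau^\ast}} \binom{s + n - |\mathbf{a}|}{n - |\mathbf{a}|} \prod_{e \in E} \binom{r + a_e - 1}{a_e}. \]
We define $\biE_\tau(u,v)$ by replacing $r,s$ with $u,v$ in this expression. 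Each summand is a polynomial in $u$ and $v$. The factor in $v$ has degree $n - |\mathbf{a}|$ and the product in $u$ has degree $\sum_e a_e = |\mathbf{a}|$, so every summand has total degree exactly $n$. The sum is finite, so $\biE_\tau$ is a polynomial of total degree at most $n$ that agrees with the lattice point count for all $r, s \in \NN$.

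It remains to show that the total degree is exactly $n$, i.e.\ that the top-degree homogeneous parts do not cancel. The top part of each summand is
\[ \frac{v^{n-|\mathbf{a}|}}{(n-|\mathbf{a}|)!} \prod_{e} \frac{u^{a_e}}{a_e!}, \]
which has a positive coefficient. All these leading parts are nonnegative combinations of monomials, so they cannot cancel, and the degree-$n$ part is nonzero. One can also see this directly from the $\mathbf{a} = \mathbf{0}$ term, which contributes $v^n/n!$ and has no competing negative term.

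The main point needing care is the first step: checking that the Minkowski decomposition with general $(r,s)$ fits Theorem~\ref{thm:post}. In particular, the $s\Delta_{E\sqcup\{0\}}$ summand must be absorbed into the $y_0$ slot, whose binomial $\binom{y_0 t + a_0}{a_0}$ differs in form from the others. Once that matches, the rest is routine.
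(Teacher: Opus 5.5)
Your proof is correct and is essentially the paper's second, explicit argument: Theorem~\ref{thm:post} at $t=1$ with $y_0=s$ and $y_e=r$, combined with Lemma~\ref{lem:tau-drac}, which gives exactly formula~\eqref{eq:Ehr-double}. The paper's primary justification is instead the Bernstein--McMullen theorem on Minkowski sums of lattice polytopes; your observation that every top-degree part has a positive coefficient (for instance $v^n/n!$ from $\mathbf{a}=\mathbf{0}$) makes explicit that the total degree is exactly $n$, a point the paper leaves implicit.
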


\begin{proof}
Since $\qQ_\tau(r,s)$ can be expressed as the 
Minkowski sum of two $n$-dimensional lattice 
polytopes with dilation factors $r$ and $s$ by
Equation~(\ref{eq:def-Qrs}), the result follows 
from a general theorem of Bernstein--McMullen 
which defines multivariate Ehrhart polynomials; 
see, for instance, \cite[Theorem 19.4]{gruber}. 
Alternatively, the explicit formula 
\begin{equation} \label{eq:Ehr-double}
\biE_\tau(r,s) = \sum_{(a_e) \in \gG_{G_\tau}} 
\binom{s + a_0}{a_0} \prod_{e \in E} 
\binom{r + a_e - 1}{a_e} 
\end{equation}
for the number of lattice points of $\qQ_\tau
(r,s)$ follows from Theorem~\ref{thm:post} for
$t=1$ and the parameters $y_0 = s$ and $y_e = r$
for $e \in E$, where the sum ranges over all 
$G_\tau$-draconian sequences 
$(a_e) \in \NN^{E \sqcup \{0\}}$ characterized 
in Lemma~\ref{lem:tau-drac}. 
\end{proof}

For our running example from Figure~\ref{fig1}
we have
\begin{multline*}
\biE_\tau(u,v) = (760 u^5 + 1680 u^4 v + 990 
u^3 v^2 + 230 u^2 v^3 + 25 uv^4 + v^5 + 2740 
u^4 + 4350 u^3 v \\ + 1830 u^2 v^2 + 280 uv^3 
+ 15 v^4 + 3860 u^3 + 4180 u^2 v + 1115 uv^2 
\\ + 85 v^3 + 2660 u^2 + 1760 uv + 225 v^2 + 
900 u + 274 v + 120) / 120
\end{multline*}
and, in particular,
\[ \begin{aligned}
   \Ehr(\qQ_\tau, t) &= (760 t^5 + 2740 t^4 + 
   3860 t^3 + 2660 t^2 + 900 t + 120)/120 \\
   \Ehr(\qQ_\tau(1,1), t) &= (3686 t^5 + 9215
	 t^4 + 9240 t^3 + 4645 t^2 + 1174 t + 120)/
	 120. \end{aligned} \]
The terms of total degree $n=5$ of 
$\biE_\tau(u,v)$ yield the decomposition $1 + 
25 + 230 + 990 + 1680 + 760 = 3686$ of the 
normalized volume of the reflexive polytope 
$Q_\tau(1,1)$.

Since $\biE_\tau(t,t)$ is the Ehrhart polynomial 
of the reflexive polytope $\qQ_{\tau}(1,1)$, we 
have 
\[ \biE_\tau(-u,-u) = (-1)^n \,
   \biE_\tau(u-1,u-1) \]
for every preorder $\tau$ of size $n$ by Ehrhart 
reciprocity. The following stronger statement has 
been verified by computer for all preorders of 
size at most 5.
\begin{conjecture} \label{conj:double}
The double Ehrhart polynomial $\biE_\tau(u,v)$ 
satisfies the identity
\begin{equation} \label{eq:conj-double}
\biE_\tau(-u,-v) = (-1)^n \, \biE_\tau(u-1,v-1) 
\end{equation}
for every preorder $\tau$ of size $n$.
\end{conjecture}

\begin{remark} \label{rem:equivalence} \rm
The specialization $v=0$ of 
Conjecture~\ref{conj:double} is in fact 
equivalent to Conjecture~\ref{conj:h-ast}.
Indeed, let us use the notation of 
Section~\ref{sec:Ehrhart}. Setting $u=-m$ and 
$v=0$ in~(\ref{eq:conj-double}) and applying 
Equation~(\ref{eq:Ehr-double}) we get 

\[ \begin{aligned}
   \Ehr(\qQ_\tau, m) &= \biE_\tau(-m-1,-1) \\
   &= (-1)^n \sum_{(a_e) \in \gG_{G_\tau}} 
	    \binom{a_0-1}{a_0} \prod_{e \in E} 
	    \binom{-m + a_e - 2}{a_e},
   \end{aligned} \]
where the sum ranges over all $G_\tau$-draconian 
sequences $(a_e) \in \NN^{E \sqcup \{0\}}$ 
characterized in Lemma~\ref{lem:tau-drac}. Since
the summand is nonzero only for $a_0 = 0$, we 
reach the formula 
\[ \Ehr(\qQ_\tau, m) = \sum_{(a_e)} \prod_{e \in E} 
	    \binom{m + 1}{a_e}, \]
where the sum ranges over all points $\mathbf{a} = 
(a_e) \in P_{\tau^\ast}$ such that $|\mathbf{a}|=n$
(these are the maximal elements of $P_{\tau^\ast}$).  
Since (see, for instance, \cite[Section~4]{AXY26})
\[ \sum_{m \ge 0} \left( \, \prod_{e \in E} 
	 \binom{m+1}{a_e} \right) t^m = 
	 \frac{\displaystyle\sum_{w \in 
	 \mathfrak{S}_{\mathbf{a}}}
	  t^{n-1-\des(w)}}{(1-t)^{n+1}} \]
for every such $\mathbf{a} \in P_{\tau^\ast}$, 
our formula for $\Ehr(\qQ_\tau, m)$ is equivalent 
to Conjecture~\ref{conj:h-ast}.
\qed
\end{remark}

\begin{remark} \rm
There are nonisomorphic preorders with equal
double Ehrhart polynomials. The smallest pair 
for which this coincidence occurs is
\begin{figure}[!h]
\centering
\begin{tikzpicture}
\tikzstyle{every node}=[draw,shape=circle, thick]

\node at (0,0) (a) {$a$};
\node at (1,0) (bc) {$b,c$};
\end{tikzpicture}
\quad\text{and}\quad
\begin{tikzpicture}
\tikzstyle{every node}=[draw,shape=circle, thick]

\node at (0,0) (a) {$a$};
\node at (2,0) (b) {$b$};
\node at (1,1) (c) {$c$};

\path [draw] (a) -- (c);
\path [draw] (b) -- (c);
\end{tikzpicture}.
\end{figure}

\end{remark}

\section{The $M$-triangle}
\label{sec:M-triangles}

This section discusses another enumerative 
invariant of the poset of lattice points of a 
preorder polytope, namely the $M$-triangle, and
extends in this setting a conjecture of 
\cite{Cha25b} on arbor polytopes. This 
conjecture provided much of the motivation 
behind the study of arbor polytopes; see 
\cite[Section~1.1]{Cha25b} for the global 
picture, which should extend verbatim to all 
preorder polytopes.

Let $P$ be a finite graded poset with rank 
function $\rho: P \to \NN$. The 
\emph{$M$-triangle} of $P$ is defined by the 
formula  
\[ M_P(x,y) = \sum_{a, b \, \in P : \, 
   a \preceq b} 
   \mu_P (a,b) x^{\rho(a)} y^{\rho(b)}, \]
where $\mu_P$ is the Möbius function of $P$.
The \emph{transmutation} of a rational 
function $f(x,y) \in \QQ(x,y)$ is defined as
\[ \overline{f}(x,y) = f \left( \frac{1-y}{1-xy}, 
   1-xy \right). \]

\smallskip
\noindent
We note that transmutation is an involution on 
$\QQ(x,y)$ and that $\overline{M}_P(x,y) \in 
\ZZ[x,y]$ for every graded poset $P$. The following 
conjecture is stated implicitly for arbors in 
\cite[Section~1.1]{Cha25b}.
\begin{conjecture} \label{conj:lattice}
For every preorder $\tau$ of size $n$ there 
exists a finite graded lattice $L_\tau$ of
rank $n$ such that $\zZ(P_\tau,t) = \zZ(L_\tau,
t)$ and
\begin{equation} \label{eq:M-transmute}
\overline{M}_{P_\tau}(x,y) = M_{L_\tau}(x,y). 
\end{equation}
In particular, the rank-generating polynomial 
of $L_\tau$ is equal to $h(\tau,t)$.
\end{conjecture}

For example, if $\tau$ is the chain preorder
of size $n$ having $n$ vertices of cardinality 
1 or one vertex of cardinality $n$, then the 
role of $L_\tau$ can be played by the 
noncrossing partition lattice of type $\TA_n$ 
and $\TB_n$, respectively; see Sections~7 
and~8 of \cite{Cha25b}. Thus, the poset 
$P_\tau$ relates the combinatorics of the 
noncrossing partition lattice to that of the
associahedron of corresponding type, the 
$h$-polynomial of which is equal to $h(\tau,
t)$. Conjecture~\ref{conj:lattice} predicts 
a similar relationship in a much more general 
context; we refer the reader to 
\cite[Section~1.1]{Cha25b} for an extensive 
discussion of this conjectural relationship 
in the setting of arbors.

The fact that Conjecture~(\ref{conj:lattice})
implies that the rank-generating polynomial 
of $L_\tau$ is equal to $h(\tau,t)$ follows
as in the special case of arbors; see 
\cite[Proposition~5.1]{Cha25b}.

Our second conjecture states that the 
transmuted $M$-triangle of $P_\tau$ satisfies 
a certain duality.
\begin{conjecture} \label{conj:M-duality}
We have
\[ (xy)^n \, \overline{M}_{P_\tau}(1/x,1/y) 
   = \overline{M}_{P_\tau}(y,x) \]
for every preorder $\tau$ of size $n$. 

Equivalently, $M_{L_\tau}(x,y) = 
M_{(L_\tau)^\ast}(x,y)$, where $(L_\tau)^\ast$ 
is the lattice dual to $L_\tau$.
\end{conjecture}

\begin{example} \rm
Let us illustrate these concepts and 
conjectures with our running example.
The $M$-triangle can be displayed as a matrix 
of coefficients, with the constant term on the 
bottom left. For the running example from
Figure~\ref{fig1} in dimension 5 with 92 
elements and 18 maximal elements, this matrix
is
\begin{equation*}
\begin{array}{rrrrrr}
-1 & 11 & -43 & 75 & -60 & 18 \\
5 & -35 & 84 & -84 & 30 &  \\
-10 & 43 & -57 & 24 &  &  \\
10 & -24 & 14 &  &  &  \\
-5 & 5 &  &  &  &  \\
1 &  &  &  &  & 
\end{array}
\end{equation*}
where the diagonal counts elements according 
to their sum of coordinates. The leftmost column 
comes from the unit cube contained in $\qQ_\tau$.
The transmutation of this $M$-triangle is
displayed by the matrix
\begin{equation*}
\begin{array}{rrrrrr}
-18 & 60 & -75 & 43 & -11 & 1 \\
60 & -174 & 180 & -77 & 11 &  \\
-75 & 180 & -139 & 34 &  &  \\
43 & -77 & 34 &  &  &  \\
-11 & 11 &  &  &  &  \\
1 &  &  &  &  & 
\end{array}
\end{equation*}
where the diagonal is now the $h$-vector. The 
identity claimed in Conjecture~\ref{conj:M-duality} 
is represented by the symmetry of the transmuted 
matrix in the main diagonal. Note that the 
number of maximal elements becomes, up to sign, the 
corner entry of the transmuted matrix.
\end{example}

\section{Examples}
\label{sec:exa}

The context of preorders allows for several 
interesting new examples of preorder polytopes, 
compared to the more restrictive setting of arbors.
This section studies such examples in some detail.
We let 
\[ \nara^\TA_n(t) = \sum_{i=0}^n \frac{1}{i+1}
\binom{n}{i}\binom{n+1}{i} t^i \]
and 
\[ \nara^\TB_n(t) = \sum_{i=0}^n \binom{n}{i}^2 
   t^i \]
be the Narayana polynomials of type $\TA_n$ and 
$\TB_n$, respectively.

\subsection{Zig-zag posets}
\label{sec:zig-zag}

Given a positive integer $n$, the 
\emph{zig-zag poset} $Z_n$ is defined as the 
partial order on the set $\{1, 2,\dots,n\}$ 
with cover relations $i \prec i+1$ for all odd 
$i \in \{1, 2,\dots,n-1\}$ and $i \succ i+1$ 
for all even $i \in \{1, 2,\dots,n-1\}$. Let 
us denote by $\qQ_n$ the preorder polytope 
and by $h_n(t)$ the $h$-polynomial associated to 
$\tau = Z_n$. The $n$th \emph{Delannoy 
polynomial} \oeis{A008288}, denoted by $d_n(t)$, 
is defined by the recurrence 
\begin{equation} \label{eq:Delannoy}
d_n(t) = (1+t) d_{n-1}(t) + t d_{n-2}(t)    
\end{equation}
for $n \ge 2$, with initial conditions $d_0(t)
= 1$ and $d_1(t) = 1+t$; we refer the reader 
to \cite{WZC19} and the references given there 
for combinatorial interpretations and explicit
formulas for these polynomials. The sum of the 
coefficients of $d_n(t)$ satisfies the 
recurrence $d_n(1) = 2 d_{n-1}(1) + d_{n-2}(1)$ 
for $n \ge 2$, where $d_0(1) = 1$ and $d_1(1) = 
2$, which yields the Pell numbers \oeis{A000129}.
\begin{proposition} \label{prop:zig-zag}
We have $h_n(t) = d_n(t)$ for every $n \ge 1$. 
In particular, the number of lattice points of 
$\qQ_n$ is equal to the Pell number $d_n(1)$.
\end{proposition}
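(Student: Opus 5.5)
We plan to prove the recurrence $h_n(t) = (1+t)h_{n-1}(t) + t\,h_{n-2}(t)$ directly from the inequality description of $\qQ_n$, after checking the initial cases. For $n=1$, $\qQ_1=[0,1]$, so $h_1(t)=1+t=d_1(t)$. For $n=2$ (relation $1\prec 2$), the connected order ideals are $\{1\}$ and $\{1,2\}$. The lattice points are $(0,0),(1,0),(0,1),(1,1),(0,2)$, which gives $h_2(t)=1+2t+2t^2$. This agrees with $d_2(t)=(1+t)^2+t$. We may also set $h_0=1$ for the empty preorder and, if convenient, start the recurrence at $n=2$.

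The constraints are as follows. The connected order ideals of $Z_n$ are the intervals $[i,j]$ whose endpoints are minimal elements (odd) or which reach the boundary $1$ or $n$. Here maximal elements are even, and an interval containing an even $k$ must contain $k\pm1$ when they exist. So the defining inequalities are $x_i+\cdots+x_j\le j-i+1$ for all such admissible intervals.

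We condition on the last coordinates. Let $n$ be the final element and consider its coordinate $x_n$.

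If $x_n=0$, the inequalities involving $n$ become implied by those on $Z_{n-1}$. Indeed, dropping $n$ from an admissible interval leaves either an admissible interval of $Z_{n-1}$ with a smaller right-hand side, or the point stays valid. We must check that removing $n$ from an ideal $\iI$ yields an ideal of $Z_{n-1}$ of size $|\iI|-1$, so the constraint $\le|\iI|$ is weaker. This gives a contribution of $h_{n-1}(t)$.

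If $x_n\ge1$, we split according to whether $n$ is maximal or minimal. We want the remaining contributions to total $t\,h_{n-1}+t\,h_{n-2}$. The natural guess is that points with $x_n=1$ correspond to points of $\qQ_{n-1}$, contributing $t\,h_{n-1}$, and points with $x_n\ge2$ correspond to points of $\qQ_{n-2}$, contributing $t\,h_{n-2}$. The second correspondence would force $x_{n-1}$ and $x_n$ in a coupled way.

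Because the slack arithmetic differs between the case where $n$ is a maximal element and the case where it is minimal, a cleaner route may be a transfer matrix. We encode states by the slack $s=j-\sum x$ of the tightest interval ending at the current position, which we expect takes only a few values. Then we verify that the generating functions satisfy the Delannoy recurrence separately for $n$ even and $n$ odd.

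Alternatively, by Theorem~\ref{thm:main-b}(a), $h_n(1)=\#P_{Z_n}=\#P_{Z_n^\ast}$. Note that $Z_n^\ast$ is again a zig-zag poset, possibly reflected. This is only a consistency check.

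The main obstacle is the step with $x_n\ge2$ (and its analogue when $n$ is minimal, where $x_n\le1$ is forced if $n$ is isolated as an ideal $\{n\}$). We must show that the prefix constraints after absorbing $x_n$ reduce exactly to $\qQ_{n-2}$ with the right count of nonzero coordinates. We would first attempt an explicit bijection that merges $x_{n-1}+x_n-1$ into position $n-1$. Failing that, we would carry out the transfer-matrix computation with two-step periodicity and check it against small cases ($n=3$ should give $d_3=1+5t+5t^2+t^3$).
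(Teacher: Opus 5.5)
There is a genuine gap, and it is in the odd case, which you flag as the main obstacle but do not resolve.

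\textbf{The $x_n=0$ slice for odd $n$.} Your three-way split (by $x_n=0$, $x_n=1$, $x_n\ge 2$) is the paper's argument when $n$ is even. There $\{n-1,n\}$ is an order ideal, so $x_{n-1}+x_n\le 2$. Hence ``$x_n\ge 2$'' means exactly $x_n=2$ and $x_{n-1}=0$. The reduction to $\qQ_{n-2}$ is then immediate, because the ideals of $Z_{n-1}$ containing $n-1$ are exactly the sets $\jJ\cup\{n-1\}$ with $\jJ$ an ideal of $Z_{n-2}$; no merging bijection is needed. For odd $n$, however, your claim that the slice $x_n=0$ is a copy of $P_{Z_{n-1}}$ is false. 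Your argument only shows that $P_{Z_{n-1}}\times\{0\}$ is contained in that slice. Since $n\prec n-1$, an ideal $\iI'$ of $Z_{n-1}$ containing $n-1$ is not an ideal of $Z_n$. It appears only as $\iI'\cup\{n\}$, and with $x_n=0$ its constraint is relaxed to $|\iI'|+1$. For $n=3$ the slice $x_3=0$ is $\{x_1\le 1,\ x_1+x_2\le 3\}$. It has $7$ points and generating polynomial $1+4t+2t^2=h_2+th_1$. Meanwhile the slice $x_3\ge 2$ is empty, because $\{3\}$ is an ideal. So the total happens to agree with your guess, but the bijections you intend to prove do not exist. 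In the odd case the $t\,h_{n-2}$ term lives inside the slice $x_n=0$.

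\textbf{The missing step.} For odd $n$, the slice $x_n=1$ correctly gives $t\,h_{n-1}$. Inside the slice $x_n=0$, the ideal $\down(n-1)=\{n-2,n-1,n\}$ gives $x_{n-2}+x_{n-1}\le 3$. The values $x_{n-1}=0,1,2$ each give an exact copy of $P_{Z_{n-2}}$. The value $x_{n-1}=3$ forces $x_{n-2}=0$ and gives a copy of $P_{Z_{n-3}}$. Hence
\[ h_n=t\,h_{n-1}+(1+2t)\,h_{n-2}+t\,h_{n-3}. \]
Substituting the already proven even-index recurrence $h_{n-1}=(1+t)h_{n-2}+t\,h_{n-3}$ turns this into $(1+t)h_{n-1}+t\,h_{n-2}$. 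This is precisely the paper's proof. Your transfer-matrix and bijection fallbacks are not carried out, and neither is needed.

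\textbf{Base case.} The point $(0,2)$ has one nonzero coordinate, so $h_2(t)=1+3t+t^2$. Your value $1+2t+2t^2$ is neither palindromic nor equal to $d_2(t)$.
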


\begin{proof}
Let $n \ge 4$ be even. The set of lattice 
points $(a_1, a_2,\dots,a_n)$ of $\qQ_n$ can 
be expressed as the disjoint union of nonempty 
subsets by the following conditions: 
\begin{itemize}
\item $a_n=0$,
\item $a_n=1$,
\item $a_n=2$, which implies $a_{n-1}=0$.
\end{itemize}
As one can easily verify, these sets are in 
one-to-one correspondence with the set of 
lattice points of $\qQ_{n-1}$, $\qQ_{n-1}$ 
and $\qQ_{n-2}$, respectively, so that 
\[ h_n(t) = (1+t) h_{n-1}(t) + th_{n-2}(t). \]

Similarly, the set of lattice points $(a_1, 
a_2,\dots,a_{n+1})$ of $\qQ_{n+1}$ can be 
expressed as the disjoint union of nonempty 
subsets by the following conditions: 
\begin{itemize}
\item $a_{n+1} = 1$,
\item $a_{n+1} = 0$ and $a_n = 0$,
\item $a_{n+1} = 0$ and $a_n = 1$,
\item $a_{n+1} = 0$ and $a_n = 2$,
\item $a_{n+1} = 0$ and $a_n = 3$, which 
      implies $a_{n-1} = 0$.
\end{itemize}
These sets are in one-to-one correspondence 
with the set of lattice points of $\qQ_n$, 
$\qQ_{n-1}$, $\qQ_{n-1}$, $\qQ_{n-1}$ and 
$\qQ_{n-2}$, respectively, so  
\[ h_{n+1}(t) = t h_n(t) + (1+2t) h_{n-1}(t) 
   + t h_{n-2}(t) = (1+t) h_n(t) + t
	 h_{n-1}(t). \]
Setting $h_0(t) = 1$, and since $h_1(t) = 
1+t$, $h_2(t) = 1+3t+t^2$ and $h_3(t) = 
1+5t+5t^2+t^3$, we have shown that $h_n(t)$ 
satisfies the recurrence (\ref{eq:Delannoy}) 
for $n \ge 2$ and the proof follows.
\end{proof}
 
The Delannoy polynomial $d_n(t)$ is known to
have all properties stated in 
Conjectures~\ref{conj:hvector} - 
\ref{conj:roots}. The palindromicity of $d_n
(t)$ follows directly from~(\ref{eq:Delannoy}); 
its $\gamma$-positivity and real-rootedness 
was shown in \cite{WZC19}. The polynomial 
$d_n(t)$ was shown to be equal to the 
$h$-polynomial of a flag simplicial polytope 
(whose boundary complex is isomorphic to the 
independence complex of a planar ternary 
graph) in \cite{BDHKY26}. We suspect that 
$d_n(t)$ is also equal to the $h$-polynomial 
of the $n$-dimensional 
pellytope~\cite{pellytopes} \cite[\S 4]{HLRZ20};
see \cite[Conjecture~6.1.36]{almeter} for a 
related conjecture. 
This example is also related to the lattices 
of sashes~\cite{Law13,Law14}. 



\subsection{Ordinal sums of antichains}
\label{sec:antichains}

For positive integers $m,n$ we will denote by 
$\tau_{m,n}$ the set $\{1, 2,\dots,m\} \sqcup
\{1^\prime, 2^\prime,\dots,n^\prime\}$ partially
ordered so that the cover relations are $i \prec 
j^\prime$ for all $i,j$. Thus, $\tau_{m,n}$ is 
the ordinal sum of an $m$-element antichain and 
an $n$-element antichain. Let us denote by 
$\qQ_{m,n}$ the preorder polytope and by 
$h_{m,n}(t)$ the $h$-polynomial associated to 
$\tau_{m,n}$. 
\begin{proposition} \label{prop:antichains}
The number of lattice points of $\qQ_{m,n}$ is 
equal to 
\[ \sum_{k=0}^m \sum_{\ell=0}^n 
   \binom{k+\ell}{k} \binom{m}{k} \binom{n}{\ell} 
	 \]
and
\[ \begin{aligned}
   h_{m,n}(t) &= \sum_{k=0}^m \sum_{\ell=0}^n 
   \binom{k+\ell}{k} \binom{m}{k} \binom{n}{\ell} 
	 t^{m-k+\ell} \\
   &= \sum_{i \ge 0} \binom{m}{i} \binom{n}{i}
	    t^i (1+t)^{m+n-2i}
   \end{aligned} \]
for all $m, n$. In particular, $h_{m,n}(t)$
is palindromic and $\gamma$-positive. Moreover, 
all roots of $h_{m,n}(t)$ are real.
\end{proposition}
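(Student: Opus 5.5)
The plan is to describe the lattice points of $\qQ_{m,n}$ explicitly, read off the first formula for $h_{m,n}(t)$, and then turn it into the $\gamma$-expansion by an algebraic identity. Real-rootedness will then reduce to that of the $\gamma$-polynomial.

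\textbf{Step 1: the lattice points.} The order ideals of $\tau_{m,n}$ are the subsets $S \subseteq \{1,\dots,m\}$, and the sets $\{1,\dots,m\} \sqcup T$ with $T \subseteq \{1',\dots,n'\}$. The inequalities for general $S$ are sums of those for singletons. So $\qQ_{m,n}$ is cut out by three families of conditions:
\begin{itemize}
\item the nonnegativity conditions;
\item $x_i \le 1$ for $1 \le i \le m$;
\item $\sum_{i=1}^m x_i + \sum_{j' \in T} x_{j'} \le m + |T|$ for all $T$.
\end{itemize}
For a lattice point, every bottom coordinate lies in $\{0,1\}$. Let $k$ be the number of bottom coordinates equal to $0$. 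The top conditions then say $\sum_{j' \in T}(x_{j'}-1) \le k$ for all $T$. The strongest choice is $T = \{j' : x_{j'} \ge 1\}$, which gives the single condition $\sum_{j'} (x_{j'}-1)^+ \le k$.

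\textbf{Step 2: counting.} To build such a point, choose the $k$ bottom zeros, in $\binom{m}{k}$ ways. Then choose the set of $\ell$ top coordinates that are nonzero, in $\binom{n}{\ell}$ ways. Finally write these top coordinates as $1+c_{j'}$ with $c_{j'} \ge 0$ and $\sum c_{j'} \le k$, in $\binom{k+\ell}{\ell}$ ways. Such a point has $m-k+\ell$ nonzero coordinates. This proves the lattice point count and the first expression for $h_{m,n}(t)$.

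\textbf{Step 3: the $\gamma$-expansion.} Expand $\binom{k+\ell}{k} = \sum_i \binom{k}{i}\binom{\ell}{i}$ by Vandermonde. Then use $\binom{m}{k}\binom{k}{i} = \binom{m}{i}\binom{m-i}{k-i}$ together with the binomial theorem. This gives
\[ \sum_k \binom{m}{k}\binom{k}{i} t^{m-k} = \binom{m}{i}(1+t)^{m-i}, \qquad \sum_\ell \binom{n}{\ell}\binom{\ell}{i} t^{\ell} = \binom{n}{i} t^i (1+t)^{n-i}. \]
Multiplying the two and summing over $i$ yields the $\gamma$-expansion. Palindromicity and $\gamma$-positivity are then immediate, since all $\gamma_i = \binom{m}{i}\binom{n}{i}$ are nonnegative.

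\textbf{Step 4: real roots.} Write $h_{m,n}(t) = (1+t)^{m+n} f\big(t/(1+t)^2\big)$, where $f(s) = \sum_i \binom{m}{i}\binom{n}{i} s^i$. The coefficients of $f$ are the coefficientwise (Hadamard) product of those of $(1+s)^m$ and $(1+s)^n$. Both of these have only real nonpositive roots, so Malo's theorem (equivalently the Schur--Malo composition theorem) shows that $f$ has only real roots, and they are negative because the coefficients are positive. For each root $s<0$, the equation $t/(1+t)^2 = s$ becomes $t^2 + (2 - 1/s)t + 1 = 0$. Its discriminant $(2-1/s)^2 - 4$ is positive, so it has two distinct real roots. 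This produces $2\deg f$ real roots of $h_{m,n}$, and the remaining roots come from the factor $(1+t)^{m+n-2\deg f}$, which accounts for the full degree $m+n$.

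The only nonroutine input is the real-rootedness of $f$. I expect that invoking the classical Hadamard-product theorem is the cleanest route. A fallback is to identify $f$ with a Jacobi polynomial, whose real-rootedness is classical.
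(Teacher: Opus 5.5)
Your proof is correct. Steps 1, 2 and 4 follow the paper.

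\begin{itemize}
\item The paper also reduces the top inequalities to the single condition $c_1+\cdots+c_\ell\le m-k$. It lets $k$ count the bottom ones rather than the bottom zeros, so it needs a reindexing $k\mapsto m-k$ that your choice avoids.
\item The paper also gets real-rootedness from the fact that $\sum_i\binom{m}{i}\binom{n}{i}t^i$ is a Hadamard product of real-rooted polynomials. It then cites Gal's remark that a real-rooted $\gamma$-polynomial yields a real-rooted $h$-polynomial.
\item You prove that remark inline. The cleanest way to phrase your root count, which also handles possible repeated roots, is the factorization $h_{m,n}(t)=c\,(1+t)^{m+n-2d}\prod_{r=1}^{d}(t-s_r(1+t)^2)$. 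Here $s_1,\dots,s_d<0$ are the roots of $f$ and $c$ is its leading coefficient.
\end{itemize}

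The genuinely different part is Step 3. The paper does not compute the $\gamma$-expansion. It recognizes the first formula as the rank-generating polynomial of Greene's poset of shuffles of two chains. It then quotes Greene's corollary for the identity with $\sum_i\binom{m}{i}\binom{n}{i}t^i(1+t)^{m+n-2i}$.

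Your route expands $\binom{k+\ell}{k}=\sum_i\binom{k}{i}\binom{\ell}{i}$ by Vandermonde and sums the two factors by the binomial theorem. This is a short, self-contained verification. The only external input left in your argument is the Malo (P\'olya--Szeg\H{o}) Hadamard-product theorem.

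The paper's route costs a citation but buys an interpretation. It exhibits $h_{m,n}(t)$ as the rank-generating polynomial of a natural poset on shuffles, in the spirit of Conjecture~\ref{conj:lattice}. It also connects the example to bubble lattices and the noncrossing bipartite complex.
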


\begin{proof}
Clearly, it suffices to prove the proposed 
formulas for $h_{m,n}(t)$. By definition, the 
lattice points of $\qQ_{m,n}$ are the tuples 
$(a_1, a_2,\dots,a_m, b_1, b_2,\dots,b_n) \in
\NN^{m+n}$ such that $a_i \in \{0, 1\}$ for all 
$1 \le i \le n$ and 
\begin{equation} \label{eq:ab}
b_{j_1} + b_{j_2} + \cdots + b_{j_r} + a_1 + 
a_2 + \cdots + a_m \le m+r 
\end{equation}
for all $1 \le j_1 < j_2 < \cdots < j_r \le n$.
Suppose that we have $a_i = 1$ for $k$ indices 
$i \in \{1, 2,\dots,m\}$ and $b_j \ge 1$ for 
$\ell$ indices $j \in \{1, 2,\dots,n\}$ and 
set $c_j = b_j - 1$ for such indices $j$. By 
symmetry, we may assume that $b_1, 
b_2,\dots,b_\ell \ge 1$. Then, the inequalities
(\ref{eq:ab}) become
\[ c_{j_1} + c_{j_2} + \cdots + c_{j_r} \le 
   m-k \]
for $1 \le j_1 < j_2 < \cdots < j_r \le \ell$
and are thus equivalent to the single 
inequality
\[ c_1 + c_2 + \cdots + c_\ell \le m-k. \]
As a result, there are $\binom{m}{k}$ ways 
to choose $a_1, a_2,\dots,a_m \in \{0,1\}$ and 
for every such choice there are 
$\binom{n}{\ell}\binom{m-k+\ell}{\ell}$ ways 
to choose $b_1, b_2,\dots,b_n$. Therefore,
\[ h_{m,n}(t) = \sum_{k=0}^m \sum_{\ell=0}^n 
   \binom{m-k+\ell}{\ell} \binom{m}{k} 
	 \binom{n}{\ell} t^{k+\ell}. \]
This formula is equivalent to the first formula 
proposed for $h_{m,n}(t)$. Since the right-hand 
side is equal to the rank-generating polynomial 
of the poset of shuffles of $(1, 2,\dots,m)$ 
and $(1^\prime, 2^\prime,\dots,n^\prime)$
\cite{Gr88}, the second formula proposed for
$h_{m,n}(t)$ follows from 
\cite[Corollary~4.8]{Gr88}.

Finally, we observe that $\sum_{i \ge 0} 
\binom{m}{i} \binom{n}{i} t^i$ is real-rooted as
a Hadamard product of real-rooted polynomials
\cite[V~155]{PS72}. By \cite[Remark~3.1.1]{Ga05},
so is $h_{m,n}(t)$.
\end{proof}

The enumerative combinatorics of the shuffle 
poset \cite{Gr88} is closely related to that 
of the bubble lattice for the same parameters, 
introduced and studied in~\cite{MM22,MM24,MM25}.
A shellable triangulation of a sphere with 
$h$-polynomial equal to $h_{m,n}(t)$ is the 
noncrossing bipartite complex of 
\cite[Section~4]{MM25}; see also
\cite[Remark~4.16]{MM25}.


\subsection{Grid posets}
\label{sec:grid}

Let us consider the direct product $C_n \times 
C_2$ of an $n$-element chain and a two-element
chain, for positive integers $n$. For small 
values of $n$, the number of lattice points of
the associated preorder polytope and the 
$h$-vector are as follows:

\bigskip
\begin{center}
\begin{tabular}{c c c}
  \toprule
  $n$ & number of lattice points & $h$-vector\\
  \midrule
  1 & 5 & $(1, 3, 1)$ \\
  2 & 38 & $(1, 9, 18, 9, 1)$ \\
  3 & 352 & $(1, 18, 86, 142, 86, 18, 1)$ \\
  4 & 3659 & $(1, 30, 260, 882, 1313, 882, 260, 
	30, 1)$ \\
  \bottomrule
\end{tabular}
\end{center}

\bigskip
\noindent
This suggests that the number of lattice points 
is given by sequence \oeis{A158266}. For the 
poset obtained from $C_n \times C_2$ by removing
the minimum and maximum elements we have

\bigskip
\begin{center}
\begin{tabular}{c c c}
  \toprule
  $n$ & number of lattice points & $h$-vector\\
  \midrule
  1 & 4 & $(1, 2, 1)$ \\
  2 & 29 & $(1, 7, 13, 7, 1)$ \\
  3 & 265 & $(1, 15, 65, 103, 65, 15, 1)$ \\
  4 & 2745 & 
	$(1, 26, 206, 659, 961, 659, 206, 26, 1)$ \\
  \bottomrule
\end{tabular}
\end{center}

\bigskip
\noindent
which suggests that the number of lattice points 
is given by sequence \oeis{A370955}. Curiously, 
the generating functions 
\[ 1 + x + 5 x^2 + 38 x^3 + 352 x^4 + 3659 x^5 
     + \cdots \]
and
\[ 1 - x - 4 x^2 - 29 x^3 - 265 x^4 - 2745 x^5 
     - \cdots \]
of these sequences are inverses of each other. 
This seems to extend to the generating 
functions of the $h$-polynomials as follows. 
Setting 
\[ \begin{aligned}
F(t,x) &= 1 + x + \left(t + 3 + t^{-1} \right) 
          x^2 + \left(t^2 + 9t + 18 + 9t^{-1} + 
					t^{-2} \right) x^3 + \cdots \\
G(t,x) &= 1 - x - \left(t + 2 + t^{-1} \right) 
              x^2 - \left(t^2 + 7t + 13 + 7t^{-1} 
							+ t^{-2} \right) x^3 - \cdots
  \end{aligned} \]
we suspect that $F(t,x) = 1/G(t,x) = 
\exp(S(t,x))$, where 
\begin{equation*}
s_n(t) = \sum_{i=0}^{n-1} \binom{n}{i}
\binom{n - 1}{i} t^i \quad \text{and} \quad
S(t,x) = \sum_{n \geq 1} s_n(t) s_n(t^{-1}) x^n/n.
\end{equation*}

For the poset obtained from $C_n \times C_2$ by 
removing the maximum element we have

\bigskip
\begin{center}
\begin{tabular}{c c c}
  \toprule
  $n$ & number of lattice points & $h$-vector\\
  \midrule
  1 & 2 & $(1, 1)$ \\
  2 & 12 & $(1, 5, 5, 1)$ \\
  3 & 100 & $(1, 12, 37, 37, 12, 1)$ \\
  4 & 980 & 
	$(1, 22, 138, 329, 329, 138, 22, 1)$ \\
  \bottomrule
\end{tabular}
\end{center}

\bigskip
\noindent
which suggests that the number of lattice 
points of the associated preorder polytope is 
given by sequence \oeis{A000888} and that the 
$h$-polynomial is equal to $\nara^{\TA}_{n-1}
(t) \nara^{\TB}_n(t)$.

\subsection{Double chains}
\label{sec:double-chains}

Let us consider the chain preorder having $n$ 
vertices, each of cardinality 2. For small 
values of $n$, we have:

\bigskip 
\begin{center}
\begin{tabular}{c c c}
  \toprule
  $n$ & number of lattice points & $h$-vector\\
  \midrule
  1 & 6 & $(1, 4, 1)$ \\
  2 & 53 & $(1, 12, 27, 12, 1)$ \\
  3 & 554 & $(1, 24, 134, 236, 134, 24, 1)$ \\
  4 & 6362 & $(1, 40, 410, 1540, 2380, 1540, 
	410, 40, 1)$ \\
  \bottomrule
\end{tabular}
\end{center}

\bigskip
\noindent
The lattice path interpretation of the lattice 
points described (for all chain preorders) in 
\cite[Section~2]{Ath26} shows that the number of 
lattice points is given by sequence 
\oeis{A066357}. The generating function 
\[ 1 + x + 6 x^2 + 53 x^3 + 554 x^4 + 6362 x^5 + 
   \cdots \]
of the number of lattice points is the inverse of
\[ 1 - \sum_{n \ge 1} \frac{1}{2n} \binom{4n-2}{2n-1} 
   x^n = 1 - x - 5 x^2 - 42 x^3 - 429 x^4 - 4862 x^5 
	 - \cdots \]
and the generating function
\[ 1 + x + \left(t + 4 + t^{-1} \right) x^2 + 
   \left(t^2 + 12t + 27 + 12t^{-1} + t^{-2} 
	 \right) x^3 + \cdots \]
of the $h$-polynomial is the inverse of $1 - 
\sum_{n \ge 1} \nara^\TA_{2n-2}(t) \, t^{-n+1} 
x^n$.

\subsection{Multiple chains}
\label{sec:multiple}

Let us consider the chain preorder $\tau_{n,k}$ 
having $n$ vertices, each of cardinality $k$. 
Experimental evidence suggests that 
\[ 1 + \sum_{n \ge 1} h(\tau_{n,k},t^2) \, t^{-nk} 
   x^n = \exp \left( \, \sum_{n \ge 1} 
	 \nara^\TB_{kn}(t^2) \, t^{-kn} \, \frac{x^n}{n} 
	 \right). \]
When $k$ is even, one can replace $t$ by $t^{1/2}$ 
and still get Laurent polynomials in $t$. For 
$k=2$ this gives a different interpretation of 
the $h$-polynomial from that of 
Section~\ref{sec:double-chains}; see \oeis{A181145}.


\subsection{Comb posets}
\label{sec:comb}

Let $\tau_n$ be the poset defined recursively 
for $n \in \NN$ as follows. The poset $\tau_0$ 
is empty and $\tau_{n+1}$ is obtained from $\tau_n$ 
by first taking the disjoint union with a poset
with one element and then adding a new minimum 
element. For small values of $n$, we have:

\bigskip
\begin{center}
\begin{tabular}{c c c}
  \toprule
  $n$ & number of lattice points & $h$-vector\\
  \midrule
  1 & 5 & $(1, 3, 1)$ \\
  2 & 33 & $(1, 8, 15, 8, 1)$ \\
  3 & 249 & $(1, 15, 61, 95, 61, 15, 1)$ \\
  4 & 2033 & $(1, 24, 166, 484, 683, 484, 166, 
	24, 1)$ \\
  \bottomrule
\end{tabular}
\end{center}

\bigskip
\noindent
which suggests that the number of lattice 
points of the associated preorder polytope 
is given by sequence \oeis{A034015}. The 
generating function
\[ x + 5 x^2 + 33 x^3 + 249 x^4 + 2033 x^5 + 
   \cdots \]
of this sequence is the compositional inverse 
of 
\[ \sum_{n \ge 1} (-1)^{n-1} \left( 1 + 2^n 
   (n-1) \right) x^n = x - 5 x^2 + 17 x^3 - 
	 49 x^4 + 129 x^5 - \cdots, \]
see \oeis{A000337}. This leads us to suspect 
that the generating function of the 
$h$-polynomial
\[ x + \left(t + 3 + t^{-1} \right) x^2 + 
   \left(t^2 + 8t + 15 + 8t^{-1} + t^{-2} 
	\right) x^3 + \cdots \]
is the compositional inverse of
\[ \sum_{n \ge 1} (-1)^{n-1} \left( 1 + 
   {(1+t)}^n (1+t+\cdots+t^{n-2}) \, t^{-n+1} 
	 \right) x^n. \]

\end{document}